\newtheorem{theorem}{Theorem}
\newtheorem{lemma}{Lemma}
\newtheorem{corollary}{Corollary}
\newtheorem{proposition}{Proposition}
\newtheorem{definition}{Definition}
\newtheorem{example}{Example}
\begin{document}
\title[Isoparametric hypersurfaces]{Ideal theory and classification of isoparametric hypersurfaces}
\author{Quo-Shin Chi}

%%\thanks{}
\address{Department of Mathematics, Washington University, St. Louis, MO 63130}
\email{chi@math.wustl.edu}
%%\date{}
\begin{abstract} The classification of isoparametric hypersurfaces with four principal curvatures in the sphere interplays
in a deep fashion with commutative algebra, whose
abstract and comprehensive nature might obscure a differential
geometer's insight into the classification problem that encompasses a wide
spectrum of geometry and topology.
In this paper, we make an effort to bridge the gap by walking through the
important part of commutative algebra
central to the classification of such hypersurfaces, such that all the
essential ideal-theoretic ingredients are laid out in a way as much intuitive,
motivating and geometric with rigor maintained as possible. We then explain how we developed the
technical side of the entailed ideal theory,
pertinent to isoparametric hypersurfaces with four principal curvatures, for the classification done in our papers~\cite{CCJ},~\cite{Ch1}
and~\cite{Ch3}.
\end{abstract}
%%\keywords{Isoparametric hypersurfaces}
%%\subjclass{Primary 53C40}
\maketitle
\pagestyle{myheadings}
\markboth{QUO-SHIN CHI}{ISOPARAMETRIC HYPERSURFACES}
%*************
%%\footnote{Keywords: Isoparametric hypersurfaces; Subjclass: Primary, 53C40}

\section{Introduction} 
An isoparametric hypersurface $M$ in the sphere is one
whose principal curvatures and their multiplicities are fixed constants.
The classification of such hypersurfaces has been an outstanding problem in
submanifold geometry, listed as Problem 34 in~\cite{Ya}, as can be witnessed by its long history. 
See Section~\ref{ISO} for more background details.

The story started with Cartan's seminal investigation and complete classification when $g$, the number of principal curvatures,
is $\leq 3$~\cite{Car1},~\cite{Car2},~\cite{Car3},~\cite{Car4}, 
followed by M\"{un}zner's remarkable structure theory~\cite{Mu} to lay the 
groundwork for the two classes of first known inhomogeneous examples with $g=4$
constructed by Ozeki and Takeuchi~\cite[I]{OT}, which was then generalized to infinite classes of inhomogeneous examples by Ferus, Karcher and
M\"{u}nzner~\cite{FKM}. 

Among other things, M\"{u}nzner~\cite[II]{Mu} established
that $g=1,2,3,4$ or 6. Thanks to the subsequent work of Abresch~\cite{A},
who identified the only two possible multiplicity pairs of the principal curvatures when $g=6$, Dorfmeister and Neher~\cite{DN1} succeeded in the classification
in the case of the smaller pair ($=(1,1)$), and recently Miyaoka~\cite{Mi},~\cite{Mi1} settled the case of the other pair ($=(2,2)$); the isoparametric 
hypersurfaces are homogeneous.

It is worth pointing out
that isoparametric submanifolds in the sphere were introduced by
Terng~\cite{Te} and later those of codimension $\geq 2$ were all classified to be homogeneous by
Thorbergsson~\cite{Th}. Thorbergsson's method was to associate the submanifold with a Tits
building to employ the rigidity of Tits buildings of rank $\geq 3$ in the classification. Though
Immervoll~\cite{Im} proved that an isoparametric hypersurface with four principal curvatures
in the sphere also gives rise to an incidence structure which is a
Tits building, it cannot be applied directly
to the classification as in Thorbergsson's approach since there is
no such classification of rank 2.

As of this writing, for $g=4$, there remains the last unsettled case with
multiplicity pair $(7,8)$. The classification enjoys a deep interaction with
a major part of the ideal theory in commutative algebra, whose abstract
and comprehensive nature
might obscure a differential geometer
when facing a classification problem of the sort such as isoparametric hypersurfaces, 
that encompasses a wide
spectrum of geometry and topology.

The purpose of the paper is twofold. On the one hand, we will walk through the
important part of commutative algebra central to the classification of
isoparametric hypersurfaces with four principal curvatures, in as much intuitive,
motivating and geometric a way with the rigor of the
presentation maintained as possible. On the other hand, with a good 
look at the entailed ideal theory we will then explain 
its technical side we developed
in~\cite{CCJ},~\cite{Ch1} and~\cite{Ch3} on which the classification hinges.

We hope the paper can bring the reader to a further appreciation of the breadth
and depth of the intriguing classification story of isoparametric hypersurfaces.

\section{A walk through some ideal theory} 

\subsection{Codimension 1 estimate and reducedness}
Let ${\mathbb C}^n$ be parametrized by $z_1,\cdots,z_n$, and let $V$ be a
{\em variety} in ${\mathbb C}^n$, i.e., a set defined by the common zeros of
$m+1$ polynomials
$p_0,p_1,\cdots,p_m$ in the polynomial ring $P[n]$ in the variables
$z_1,\cdots,z_n$. Hilbert's basis theorem~\cite[p. 13]{Fu} implies that
all ideals of $P[n]$ are finitely generated. Moreover, Hilbert's Nullstellensatz~\cite[p. 20]{Fu} states that $f\in P[n]$ vanishes on $V$
if and only if $f^n$, for some positive integer $n$, belongs to the ideal $I\subset P[n]$ generated by
$p_0,\cdots,p_m$, denoted by $(p_0,\cdots,p_m)$ henceforth.
In particular, if we let ${\mathcal O}(V)$ be the ideal of $P[n]$ of all
polynomial functions vanishing on $V$, also called the {\em coordinate ring} of $V$,
then there is a one-to-one correspondence between a variety $V$
and its coordinate ring ${\mathcal O}(V)$ in ${\mathbb C}^n$.

In general, $V$ may have finitely many irreducible components
$V_1,\cdots,V_s$
which cannot be further decomposed into unions of
varieties,
a consequence of Hilbert's basis theorem~\cite[pp. 15-16]{Fu}. Dually,
${\mathcal O}(V)$ is the intersection of finitely many prime ideals

\begin{equation}\label{primary}
{\mathcal O}(V)=\cap_{j=1}^s {\mathcal P}_j,
\end{equation}
where ${\mathcal P}_j$ is the ideal of $f\in P[n]$ vanishing on $V_j$. (Recall
an ideal ${\mathcal P}$ is prime if $ab\in{\mathcal P}$ implies either $a$ or $b$
is in ${\mathcal P}$.) Each ${\mathcal P}_j$ is a minimal prime ideal containing ${\mathcal O}(V)$
since $V_j$ is contained in no other irreducible varieties contained in $V$. 
On the other hand, each $V_j$ is a complex manifold
away from its {\em singular set}, which is itself a variety of a smaller
dimension where $V_j$ is not manifold-like. In addition,
there is another type of singular points of $V$, namely, those
which lie in the intersection of two irreducible components where $V$ is not
manifold-like. Together, the
two types of
points constitute the singular set ${\mathcal S}(V)$ of $V$. Explicitly,
\begin{equation}\label{Dim}
{\mathcal S}(V)=(\cup_{i\neq j}(V_i\cap V_j))\cup(\cup_{j}{\mathcal S}(V_j)),
\end{equation}
where if the coordinate ring ${\mathcal P}_j$ of $V_j$ is generated by the
polynomials $q_1,\cdots,q_l$, we let
$$
\text{edim}(z):=n-\text{rank}(\partial(q_1,\cdots,q_l)/
\partial(z_1,\cdots,z_n)),
$$
be the {\em embedding dimension} that is the natural dimension one expects from
the implicit function theorem in calculus. Then
$$
\dim(V_j)=\inf_{z\in V_j}\text{edim}(z),\quad{\mathcal S}(V_j)=\{z\in V_j:\text{edim}(z)>\dim(V_j)\}.
$$
See~\cite[p. 170]{Ku} for~\eqref{Dim} that is even true on the ideal level.

\begin{example} Consider the polynomial
$$
p=(x-1)(y^2-x^2(x+1))
$$
over ${\mathbb C}^2$. The variety $p=0$ consists of two irreducible
components $V_1$ and $V_2$, which are respectively the zeros sets $x-1=0$
and $y^2-x^2(x+1)=0$. The singular set of $V$ consists of the singular point
of $V_2$, which is $(0,0)$, and $(1,\pm\sqrt{2})$, the two points of
intersection of $V_1$ and $V_2$.
\end{example}

Here comes the subtlety. In general ${\mathcal O}(V)$ properly contains $I$
that defines the variety $V$.

\begin{example}\label{double} Consider $p_0(x,y)=y-x^2$ and $p_1(x,y)=y$. Their common
zero set
$V$ is $\{(0.0)\}$. The polynomial $x$ vanishes on $V$, i.e.,
$x\in {\mathcal O}(V)$. However, $x$
does not belong to the ideal $I=(p_0,p_1)$, as can be easily verified.
Instead, $x^2$ lies in $I$.
\end{example}

For an ideal $I$, we denote
by $\sqrt{I}$ the {\em radical} of $I$ consisting of $f\in P[n]$ such that
$f^n\in I$ for some positive integer $n$. A fundamental question is:

\vspace{2mm}

\noindent Under what condition $I$, which defines $V$, is exactly ${\mathcal O}(V)$?

\vspace{2mm}

Clearly, a necessary and sufficient condition is that $f$ vanishes on
$V$ implies $f$ lies in $I$. Alternatively put, by Nullstellensatz,
$f^n\in I$
for some $n$ implies $f\in I$, i.e., $\sqrt{I}=I$, in which case $I$ is
called a {\em radical ideal} and $P[n]/I$ interchangeably is called
a {\em reduced ring}, for reason that it thus has no nilpotent elements, i.e.,
no $r\neq 0$ for which $r^n=0$ for some $n$.

Note that $I$ is radical when $I$ is a prime ideal,
or equivalently,
when the variety $V$ defined by $I$ is irreducible. The second fundamental question
is:

\vspace{2mm}

\noindent Under what condition is $I$ a prime ideal?

\vspace{2mm}

To answer the first question, let us observe that if $I$ is radical, i.e., if
$I={\mathcal O}(V),$ then by~\eqref{primary}, we must have

\vspace{2mm}

\noindent $(\dagger)$ $I$ is the intersection of only those minimal prime ideals containing I.

\vspace{2mm}

Not all ideals are the intersection of only minimal prime ideals containing $I$.

\begin{example} Consider $I=(x^2,xy)$ in $P[2]$. It is easily seen that
$$
I=(x)\cap (x^2,y).
$$
Since the variety defined by $I$ is the $y$-axis, the only minimal ideal containing
$I$ is $(x)$. Note that $(x)=\sqrt{I}$.
\end{example}

In addition, we must also have
                                               
\vspace{2mm}

\noindent $(\ddagger)$ For each $m$ in a minimal prime ideal ${\mathcal P}$
containing $I$,
there is an $s$ of $P[n]$ not in ${\mathcal P}$
such that $sm\in I$.

\vspace{2mm}

In fact, for each $m\in{\mathcal P}_1$,
pick an $s\in(\cap_{j=2}^s{\mathcal P}_j)\setminus{\mathcal P}_1$. Then $s$ is a polynomial
vanishing on $\cup_{j=2}^s V_j$ but not on $V_1$. We have $sm=0$ on $V$, etc.

It turns out that $(\dagger)$ and $(\ddagger)$ are also sufficient to imply
that $I$ is radical, called Serre's $(S_1,R_0)$ criterion.
One needs to establish that $\sqrt{I}=I$. To this end, on the one hand 
$(\dagger)$ and $(\ddagger)$ ensure that $I$ is the intersection of minimal prime ideals containing $I$~\cite[p. 181]{Ku}. On the other hand, 
it is well known~\cite[p. 71]{Ei} that $\sqrt{I}$ is the intersection of all prime ideals, and so in particular, is the intersection of all minimal prime ideals.
Thus $\sqrt{I}=I$. 

So now it comes down to asking when $(\dagger)$ and $(\ddagger)$ hold true.
A broad category in which $(\dagger)$ is valid is when the generators $p_0,\cdots,p_m$ of $I$ form a
{\em regular sequence}, a notion central in commutative algebra that generalizes that
of smooth transversal intersections.

Recall that in a ring an element $a\neq 0$ is called a
{\em zero divisor} if $ab=0$ for some element $b\neq 0$. Otherwise,
it is called a {\em non-zerodivisor}.

\begin{definition}\label{def} %%Recall~\cite[p 152]{Ku} that 
A regular sequence in the polynomial ring $P[n]$
is a sequence $p_0,\cdots,p_k$ in $P[n]$ such that firstly the variety
defined by $p_0=\cdots=p_k=0$ in ${\mathbb C}^n$ is not empty. Moreover,
$p_i$ is 
a non-zerodivisor in the quotient ring $P[n]/(p_0,\cdots,p_{i-1})$ for
$1\leq i\leq k$;
in other words, any relation
$$
p_1f_1+\cdots+p_{i-1}f_{i-1}+p_if_i=0
$$
will result in $f_i$ being in the form
$$
f_i=p_0h_0^i+\cdots+p_{i-1}h_{i-1}^i
$$
for some $h_0^i,\cdots,h_{i-1}^i\in P[n]$ for $1\leq i\leq k$.
\end{definition}
Thus a regular sequence imposes strong algebraic independence amongst its
elements. We shall return to this later.

\begin{example}\label{reg}
A single nonconstant $p\in P[n]$ forms a regular sequence, because by Nullstellensatz
$p=0$ is nonempty, which is the only non-void condition in the definition of a regular
sequence.

Two homogeneous and relatively prime polynomials $p$ and $q$ of degree $\geq 1$
form a regular sequence. Firstly, $p=q=0$ is nonempty since 0 is clearly a solution.
Secondly, $pf_1+qf_2=0$ implies $f_2=ph$ since $p$ and $q$ are relatively prime.

More generally, any two relatively prime polynomials $p$ and $q$ with a nonempty
common zero set form a regular sequence.
\end{example}

\begin{example}\label{coordinate} The first $k$ coordinates $z_1,\cdots,z_k$ of\, ${\mathbb C}^n$
form a regular sequence for any $k$. To see this, first of all $z_1=\cdots=z_k=0$
is not empty. Next, if
$$
z_1f_1+z_2f_2=0,\quad\text{or}\;\;z_2f_2=-z_1f_1,
$$     
then since $z_2$ does not vanish identically on the hyperplane $z_1=0$, it must be that
$f_2$ does, so that $f_2=z_1g_1$. Similarly, if
$$
z_3f_3=-z_1f_1-z_2f_2,
$$
then $f_3$ must vanish identically on the linear subspace $z_1=z_2=0$,
which ensures that
$f_3=z_1h_1+z_2h_2,$ etc.
\end{example}

\begin{example} $p=xz$ and $q=yz$ in\, ${\mathbb C}^3$ do not form a regular sequence.
This is because $py-qx=0$ and $x$ is not a multiple of\, $p$. Note that $p=q=0$
is the variety\, $V$ consisting of the plane $z=0$ and the line $x=y=0$.
$V$ is not of pure dimension.
\end{example}

It is a deep fact that the variety $V$ defined by a regular sequence $p_0,\cdots,p_m$ in
${\mathbb C}^n$ is of pure dimension $n-m-1$. It is not just that the manifold part
of each irreducible component of $V$ is of the right dimension $n-m-1$. What is remarkable
is that it is the right dimension at each singular point as well,
more generally so on the ideal level!
The technical and deep concept
entailed here is {\em Cohen-Macaulayness}. That $(\dagger)$ holds for
an ideal
$I\subset P[n]$ generated by a regular sequence is a consequence of this property
of pure dimension, on the ideal level, in the context of
Macaulay Unmixedness Theorem~\cite[p. 187]{Ku}.

Having set aside $(\dagger)$, let us turn to $(\ddagger)$. We now express it
in terms of the ring $R:=P[n]/I$ itself to make the statement intrinsic.
%%We shall henceforth write
%%$$
%%R:=P[n]/I.
%%$$

\vspace{2mm}

\noindent $(\ddagger')$ For each $m$ in a minimal prime ideal ${\mathcal P}$
in $R$,
there is an $s\in R\setminus{\mathcal P}$
such that $sm=0$.

\vspace{2mm}

Before proceeding further, let us look at
Example~\ref{double} once more. In the example,
since the ideal $I$ is generated by
$y-x^2$ and $y$, or equivalently by $x^2$ and $y$, the quotient ring is thus
$$
R=P[2]/I=\{a+bx:a,b\in{\mathbb C}, x^2=0\}.
$$
The minimal prime ideal ${\mathcal P}:=(x)$ in $R$ fails to satisfy 
$(\ddagger')$. Indeed, the only $s\in R\setminus {\mathcal P}$ is a nonzero constant in ${\mathbb C}$,
whose product with $x$ can never be zero. Note that this example satisfies $(\dagger)$
as $p_0$ and $p_1$ in the example form a regular sequence.

Now, it is a pleasant fact that the implicit function theorem
comes to the rescue to resolve $(\ddagger')$. This is known as
Serre's criterion of reducedness~\cite[p. 462]{Ei}.

\begin{theorem}\label{Serre} {\rm (Serre)} Let $I$ be the ideal generated by
a regular sequence $p_0,\cdots,p_m,m+1\leq n,$ in $P[n]$
that define the variety $V$. Let $J$ be the subvariety of\, $V$
consisting of all points of\, $V$ where the Jacobian matrix
$$
\partial (p_0,\cdots,p_m)/\partial(z_1,\cdots,z_n)
$$
is not of full rank $m+1$. Suppose the codimension of $J$ is $\geq 1$ in\, $V$.
Then $R:=P[n]/I$ is reduced.
\end{theorem}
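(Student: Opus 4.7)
The plan is to derive the theorem from Serre's $(R_0)+(S_1)$ criterion, by establishing the two conditions $(\dagger)$ and $(\ddagger')$ highlighted in the walk-through. Once these hold, the equality $\sqrt{I}=I$ follows exactly as indicated there.

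First, for $(\dagger)$ I would invoke the regular sequence hypothesis together with the Macaulay Unmixedness Theorem cited just before the statement of the theorem. Since $p_0,\ldots,p_m$ form a regular sequence of length $m+1$ in the Cohen--Macaulay ring $P[n]$, the quotient $R=P[n]/I$ is itself Cohen--Macaulay of pure dimension $n-m-1$. Hence every associated prime of $R$ is a minimal prime of the same height $m+1$, so $I$ is the intersection of precisely those minimal primes that contain it and has no embedded primary component; this is $(\dagger)$.

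The substantive part is $(\ddagger')$, where the Jacobian hypothesis and the implicit function theorem enter. Let $\mathcal{P}_j$ be a minimal prime of $P[n]$ containing $I$, corresponding to an irreducible component $V_j$ of $V$. Because $J$ has codimension at least $1$ in $V$, the generic point of $V_j$ lies outside $J$, so the Jacobian $\partial(p_0,\ldots,p_m)/\partial(z_1,\ldots,z_n)$ has full rank $m+1$ there. The localization $P[n]_{\mathcal{P}_j}$ is a regular local ring of dimension $m+1$ (as a localization of a polynomial ring), and the Jacobian rank condition says precisely that the images of $p_0,\ldots,p_m$ are linearly independent in the cotangent space $\mathcal{P}_j P[n]_{\mathcal{P}_j}/(\mathcal{P}_j P[n]_{\mathcal{P}_j})^2$; by Nakayama they already generate the whole maximal ideal $\mathcal{P}_j P[n]_{\mathcal{P}_j}$. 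Thus $I\, P[n]_{\mathcal{P}_j}=\mathcal{P}_j P[n]_{\mathcal{P}_j}$, and the localization
$$
R_{\bar{\mathcal{P}}_j}\;=\;P[n]_{\mathcal{P}_j}\big/I\,P[n]_{\mathcal{P}_j}
$$
is a field. Consequently, any $\bar m\in\bar{\mathcal{P}}_j$ maps to zero in this field, which by the definition of localization means there is some $\bar s\in R\setminus\bar{\mathcal{P}}_j$ with $\bar s\bar m=0$; this is exactly $(\ddagger')$.

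The main obstacle -- or at least the conceptual heart -- is the bridge from the analytic flavor of the implicit function theorem, which naturally provides local holomorphic straightening coordinates near a generic smooth point of $V_j$, to the purely ideal-theoretic equality $I\,P[n]_{\mathcal{P}_j}=\mathcal{P}_j P[n]_{\mathcal{P}_j}$. The Jacobian criterion for a regular system of parameters in a Noetherian regular local ring encodes precisely the content of the implicit function theorem and makes this transition rigorous, after which the combination of Cohen--Macaulayness and the $(R_0)+(S_1)$ criterion closes the argument.
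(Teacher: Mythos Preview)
Your argument is correct and follows the paper's overall strategy of verifying $(\dagger)$ and $(\ddagger')$; the treatment of $(\dagger)$ via Cohen--Macaulayness and Macaulay Unmixedness is identical. For $(\ddagger')$, however, you take a somewhat different route: you work upstairs in the regular local ring $P[n]_{\mathcal{P}_j}$ and invoke the Jacobian criterion for a regular system of parameters to conclude directly that $I\,P[n]_{\mathcal{P}_j}=\mathcal{P}_jP[n]_{\mathcal{P}_j}$, so that $R_{\bar{\mathcal{P}}_j}$ is a field. The paper instead works downstairs in $R_{\mathcal P}$, develops the conormal exact sequence
\[
0\longrightarrow \mathcal{P}_{\mathcal P}/(\mathcal{P}_{\mathcal P})^2\longrightarrow \kappa(\mathcal P)\otimes_{R_{\mathcal P}}\Omega^1(R_{\mathcal P})\longrightarrow \Omega^1(\kappa(\mathcal P))\longrightarrow 0,
\]
and counts $\kappa(\mathcal P)$-dimensions (both outer terms have dimension $n-m-1$, the middle term because the Jacobian has generic rank $m+1$) to force $\mathcal{P}_{\mathcal P}/(\mathcal{P}_{\mathcal P})^2=0$. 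Your version is quicker and more self-contained, since it packages the needed injectivity of $d$ into the standard Jacobian criterion; the paper's version, on the other hand, deliberately builds the K\"ahler-differential machinery because the very same conormal sequence is reused in the proof of the codimension~2 normality criterion (Theorem~\ref{Serre1}), where the dimension count yields $\dim_{\kappa(P)}P_P/(P_P)^2=1$ rather than $0$.
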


Before we outline the idea of the proof of Serre's criterion of reducedness,
let us first remark that $(\ddagger')$
can be further transformed into a statement in terms of the important
concept of {\em localization}
in commutative algebra.

\begin{definition} Let $R$ be a commutative ring with identity, and let $S\setminus\{0\}$
be a {\em multiplicatively closed} subset of $R$ in the sense that $ab\in S$ for $a$ and $b$
in $S$. We define $R_S$ to be the ring
$$
R_S:=\{r/s: s\in S\}.
$$
Here, $r/s$ is the equivalence class of pairs $(r,s)$ subject to the relation
$(r_1,s_1)\sim(r_2,s_2)$ if there is an $t\in S$ such that
$t(r_1s_2-r_2s_1)=0$.
\end{definition}
The extra $t$ in the definition
is to ensure $r_1/s_1=tr_2/ts_2$ if $r_1/s_1=r_2/s_2$.

\begin{example}\label{primeideal} When $S=R\setminus{\mathcal P}$ for a prime ideal
${\mathcal P}$,
the ring $R_S$ is denoted instead by $R_{\mathcal P}$.

$R_{\mathcal P}$ is a {\em local ring} in the sense that ${\mathcal P}_{\mathcal P}$
is its unique maximal ideal. To see this, we observe that
$r\in R_{\mathcal P}\setminus {\mathcal P}_{\mathcal P}$ if and only if\, $r$
is a {\em unit} in
$R_{\mathcal P}$ (a unit $a$ is one such that $ab=1$ for some $b$). (Reason: $r=a/b$ with $a,b\in R\setminus {\mathcal P}$ so that
$(a/b) (b/a) =1$, and vice versa.) Moreover,
any proper ideal $I$ in $R_{\mathcal P}$ can never admit any unit, and so
$I$ must be contained in the ideal ${\mathcal P}_{\mathcal P}$.

$r/s$ is regarded as a "rational function" of $r$ divided by $s$,
where $s$ does not vanish
on the irreducible variety defined by ${\mathcal P}$.
\end{example}

\begin{example} Recall that a commutative ring $R$ with identity is
a {\em domain} if it has no zero divisors.
For an ideal $I$ of $R$, the ring $R/I$ is a domain
if and only if $I$ is a prime ideal.

Assume $R$ is a domain. Let $S:=R\setminus\{0\}$. Then $S$ is multiplicatively
closed. $R_S$ is a field called the {\em quotient field} of $R$.

Note that $R_{\mathcal P}/{\mathcal P}_{\mathcal P}$
is exactly the quotient field $\kappa({\mathcal P})$ of the domain
$R/{\mathcal P}$ via the map
$$
r/s\in R_{\mathcal P}\longmapsto (r+{\mathcal P})/(s+{\mathcal P})\in\kappa({\mathcal P}).
$$
\end{example}

\begin{example} More generally, let $R$ be a commutative ring with identity, and let $S$ be its
subset of non-zerodivisors. Then $S$ is multiplicatively closed. $R_S$
is called the {\em quotient ring} of $R$, denoted by $Q(R)$.
\end{example}

With Example~\ref{primeideal}, $(\ddagger')$ can be rephrased as

\vspace{2mm}

\noindent $(\bullet)$ The maximal ideal ${\mathcal P}_{\mathcal P}=0$.

\vspace{2mm}

\begin{example} Let us look at Example~\eqref{coordinate}. The prime ideal ${\mathcal P}=(z_1,\cdots,z_k)$
define the linear subspace $z_1=\cdots=z_k=0$. Let $x=(z_{k+1},\cdots,z_n)$
and $y=(z_1,\cdots,z_k)$. Any polynomial $f$ can be Taylor expanded as
\begin{equation}\label{taylor}
f(x,y)=f_0(x)+f_1(x)y+f_2(x)y^2+\cdots
\end{equation}
with the obvious shorthand notation. Now, $P[n]_{\mathcal P}$ is the set of
all rational functions $f/g$ with $f$ and $g$ given as in~\eqref{taylor} and
$g_0\neq 0$,
while ${\mathcal P}_{\mathcal P}$ consists of $f/g$ in $P[n]_{\mathcal P}$
with $f_0=0$. $P[n]_{\mathcal P}/{\mathcal P}_{\mathcal P}$ is the quotient
field $\kappa({\mathcal P})$ consisting of rational
functions of the form $f_0(x)/g_0(x)$ with $g_0\neq 0$.
\end{example}

\begin{example}\label{jacobian}
Continuing with the preceding example, for $f/g\in {\mathcal P}_{\mathcal P}$
with $f_0=0$, let us take the first differential restricted to $y=0$ to obtain

\begin{equation}\label{toy}
{\mathcal P}_{\mathcal P}\longrightarrow \Omega^1(P[n]_{\mathcal P})|_{y=0},
\quad \frac{f}{g}\longmapsto  d(\frac{f}{g})|_{y=0} = \frac{f_1dy}{g_0},
\end{equation}
whose kernel consists of
$$
\frac{f}{g},\quad f=f_2y^2+f_3y^3+\cdots=y^2h\;\;\text{for some}\;\; h \;\;\text{so that}\;\;\frac{f}{g}\in({\mathcal P}_{\mathcal P})^2.
$$
Therefore, we have the injection
$$
0 \longrightarrow {\mathcal P}_{\mathcal P}/({\mathcal P}_{\mathcal P})^2\stackrel{D}
\longrightarrow\Omega^1(P[n]_{\mathcal P})|_{y=0},
$$
where $D$ is induced by $d$. On the other hand, We have the natural projection
\begin{eqnarray}\label{TOY}
\aligned
&\Omega^1(P[n]_{\mathcal P})|_{y=0}\stackrel{\pi}\longrightarrow
\Omega^1(\kappa({\mathcal P}))\longrightarrow 0,\\
&d(\frac{f}{g})|_{y=0}=d(\frac{f_0}{g_0})+\frac{g_0f_1-f_0g_1}{g_0^2}dy
\longmapsto d(\frac{f_0}{g_0}),
\endaligned
\end{eqnarray}
so that in fact we arrive at the exact sequence (called the {\em conormal} sequence)

\begin{equation}\label{exact}
0 \longrightarrow {\mathcal P}_{\mathcal P}/({\mathcal P}_{\mathcal P})^2\stackrel{D}
\longrightarrow\Omega^1(P[n]_{\mathcal P})|_{y=0}
\stackrel{\pi}\longrightarrow\Omega^1(\kappa({\mathcal P}))\longrightarrow 0
\end{equation}
considered as vector spaces over the field $\kappa({\mathcal P})$.
\end{example} 

More generally, for $R=P[n]/I$ with $I=(p_0,\cdots,p_m)$,
consider the first differential
$$
I\stackrel{d}\longrightarrow R\otimes_{P[n]}\Omega^1(P[n]),
$$
where
$$
d:p_i\longmapsto 1\otimes dp_i
=1\otimes \sum_j\frac{\partial p_i}{\partial z_j} dz_j =
\sum_j\frac{\partial p_i}{\partial z_j} (\text{mod}\;\; I)\otimes dz_j.
$$
Since $dp_i^2=0$, we see $d$ induces a map
$$
I/I^2\stackrel{D}\longrightarrow R\otimes_{P[n]}\Omega^1(P[n]).
$$
We wish to define the projection from $R\otimes_{P[n]}\Omega^1(P[n])$ to
$\Omega^1(R)$. But what is the $R$-module $\Omega^1(R)$ of
first differentials (officially called {\em Kaehler differentials})
for $R$, when
the corresponding variety may have singularities? The "quick-and-dirty" way,
for our expository purpose, is just to
define $\Omega^1(R)$ to be the cokernel of $D$ (see~\cite[p. 180]{Ma} for a formal definition).
%%Explictly, since,
%%as an $R$-module, 
%%$$
%%R\otimes_{P[n]}\Omega^1(P[n])\simeq R^n,
%%$$
%%where the isomorphism is given by
%%$$
%%1\otimes \sum_jf_jdz_j \longrightarrow \sum_jf_j (\text{mod}\;\; I)\otimes dz_j= (f_1(\text{mod}\;\; I),\cdots,f_n(\text{mod}\;\; I)),
%%$$
%%we see $\Omega^1(R)$, the cokernel of $D$, is just the cokernel of the Jocobian matrix $J=\partial(p_0,\cdots,p_m)/\partial(z_1,\cdots,z_n)$:
%%$$
%%J:R^n\longrightarrow R^n.
%%$$
In accordance, we have thus the natural projection
\begin{equation}\label{cokernel}
R\otimes_{P[n]}\Omega^1(P[n])\stackrel{\pi}\longrightarrow\Omega^1(R)
\end{equation}
given by
\begin{equation}\label{naturalproj}
1\otimes dz_j\longmapsto d(z_j+I):= dz_j+(\text{mod}\;\;dp_0,\cdots,dp_m).
\end{equation}
Hence we obtain
\begin{equation}\label{rightexact}
I/I^2\stackrel{D}\longrightarrow R\otimes_{P[n]}\Omega^1(P[n])\stackrel{\pi}\longrightarrow\Omega^1(R)\longrightarrow 0.
\end{equation}
The sequence cannot be made left exact in general:

\begin{example} Consider $I=(x^2,xy)$. We know $x^3\in I$ and
$$
D(x^3+I^2)=3x^2(\text{mod}\;\;I)\otimes dx=0.
$$
However, it can be easily checked that $x^3\notin I^2$.
\end{example}
The striking fact is that~\eqref{rightexact} can be made exact if we localize,
as in~\eqref{exact}, when we replace $P[n]$ by $R_{\mathcal P}$, 
$I$ by the maximal
ideal ${\mathcal P}_{\mathcal P}$ of $R_{\mathcal P}$, and $R=P[n]/I$ by
$R_{\mathcal P}/{\mathcal P}_{\mathcal P}=\kappa({\mathcal P})$, the quotient field
of the domain $R/{\mathcal P}$:
\begin{equation}\label{exquisite}
0 \longrightarrow {\mathcal P}_{\mathcal P}/({\mathcal P}_{\mathcal P})^2\stackrel{D}
\longrightarrow\kappa({\mathcal P})\otimes_{R_{\mathcal P}}\Omega^1(R_{\mathcal P})
\stackrel{\pi}\longrightarrow\Omega^1(\kappa({\mathcal P}))\longrightarrow 0,
\end{equation}
considered as vector spaces over $\kappa({\mathcal P})$. Here,
\begin{equation}\label{localized}
\Omega^1(R_{\mathcal P}):=R_{\mathcal P}\otimes_{R}\Omega^1(R)
\end{equation}
given by
$$
d(r/s):= -\frac{r}{s^2}\otimes ds+\frac{1}{s}\otimes dr
$$
with $\Omega^1(R)$ defined in~\eqref{cokernel}. (In fact, the equality in~\eqref{localized} can be derived as a consequence of the formal definition of 
Kaehler differentials~\cite[p. 187]{Ma}. We introduce it as a definition for the sake of expository convenience.)  

The underlying idea for the validity of~\eqref{exquisite} is hidden
in~\eqref{exact}. Namely, as long as we have a left inverse
$$
D^{-1}:\kappa({\mathcal P})\otimes_{R_{\mathcal P}}\Omega^1(R_{\mathcal P})
\longrightarrow
{\mathcal P}_{\mathcal P}/({\mathcal P}_{\mathcal P})^2
$$
such that
$$
D^{-1}\circ D=id,
$$
then $D$ is injective. Accordingly, given $D^{-1}$, one can define a morphism
\begin{equation}\label{1storder}
\nabla:h\in R_{\mathcal P}\longmapsto D^{-1}(1\otimes dh)\in 
{\mathcal P}_{\mathcal P}/({\mathcal P}_{\mathcal P})^2.
\end{equation}
Intuitively, $\nabla$ picks up the first order term of the Taylor expansion of
$h$, which can be seen by looking at~\eqref{TOY}, where
$$
D^{-1}:1\otimes d(\frac{f}{g})|_{y=0}\longmapsto
\frac{g_0f_1-f_0g_1}{g_0^2}y\;\;(\text{modulo higher order terms}),
$$
so that
\begin{equation}\nonumber
\nabla:\frac{f}{g}\longmapsto
\frac{g_0f_1-f_0g_1}{g_0^2}y\;\;(\text{modulo higher order terms}),
\end{equation}
where the right hand side is exactly the first order term of $f/g$ when we expand
it as
$$
\frac{f}{g}(x,y)=\frac{f_0}{g_0}+\frac{g_0f_1-f_0g_1}{g_0^2}y+\cdots.
$$
With the intuitive interpretation in mind, it is clear that
\begin{equation}\label{PpP}
h-\nabla(h)=0\in {\mathcal P}_{\mathcal P}/({\mathcal P}_{\mathcal P})^2,
\quad h\in {\mathcal P}_{\mathcal P}.
\end{equation}
Returning to~\eqref{1storder}, therefore, the map
$$ 
\iota:R_{\mathcal P}\longrightarrow R_{\mathcal P}/({\mathcal P}_{\mathcal P})^2,\quad h\longmapsto h-\nabla(h)
$$
intuitively picks up the $0th$ order term of $h$. Moreover, since
$\iota({\mathcal P}_{\mathcal P})=0$ by~\eqref{PpP},
it follows that $\iota$ descends to a map
$$
\iota:R_{\mathcal P}/{\mathcal P}_{\mathcal P}\longrightarrow R_{\mathcal P}/({\mathcal P}_{\mathcal P})^2,\quad h\longmapsto h-\nabla(h).
$$
In other words, the exact sequence
\begin{equation}\label{split}
0\longrightarrow {\mathcal P}_{\mathcal P}/({\mathcal P}_{\mathcal P})^2
\longrightarrow R_{\mathcal P}/({\mathcal P}_{\mathcal P})^2
\longrightarrow R_{\mathcal P}/{\mathcal P}_{\mathcal P}\longrightarrow 0
\end{equation}
splits by $\iota$ as ${\mathbb C}$-algebras. Conversely, the splitting of the sequence establishes the existence of $D^{-1}$,
so that~\eqref{exquisite} is true. We refer the reader to~\cite[p. 204]{Ma} for a proof of~\eqref{split}.

We are now ready to see why $(\bullet)$ holds true. Indeed, it suffices to
verify, via~\eqref{exquisite},
that, as vector spaces over $\kappa({\mathcal P})$, the dimension
of $\kappa({\mathcal P})\otimes_{R_{\mathcal P}}\Omega^1(R_{\mathcal P})$
equals that of $\Omega^1(\kappa({\mathcal P}))$. Now, since $\kappa({\mathcal P})$
is the quotient field of the domain $R/{\mathcal P}$, or rather, the rational
function field of the underlying irreducible variety $W$, the Kaehler module
$\Omega^1(\kappa({\mathcal P}))$ must be
of the same dimension as that of $W$, which is $n-m-1$, by the fact that
$p_0,\cdots,p_m$
defining the variety $V$ form a regular sequence so that $V$ is of pure dimension
$n-m-1$. (See~\cite[p. 191]{Ma} for a formal proof.) On the other hand, by~\eqref{naturalproj}, the image of $D$ in~\eqref{rightexact} is of
dimension $m+1$, the generic rank of the
Jocobian matrix $J$ by assumption,
as a vector space over $\kappa({\mathcal P})$,
so that $\Omega^1(R)$, the cokernel of $D$, is of dimension $n-m-1$ as a vector
space over $\kappa({\mathcal P})$. Consequently, by~\eqref{localized}, the dimension of $\kappa({\mathcal P})\otimes_{R_{\mathcal P}} \Omega^1(R_{\mathcal P})$ is $n-m-1$.

We have thus arrived at Serre's criterion of reducedness. 

\subsection{Codimension 2 estimate and normality} We now turn to the second
question as to under what condition a reduced ideal $I$ generated by
$p_0,\cdots,p_m$ in $P[n]$ is prime. Clearly, a necessary
condition is that the variety $V$ defined by $I$ is connected. It
turns out that the remaining condition sufficient for the primeness of $I$
is the codimension 2 Jacobian condition~\cite[p. 462]{Ei}.

\begin{theorem}\label{Serre1} {\rm (Serre)} Let $I$ be the ideal generated by
a regular sequence $p_0,\cdots,p_m,m+1\leq n,$ in $P[n]$
that define a connected variety $V$. Let $J$ be the subvariety of\, $V$
consisting of all points of $V$ where the Jacobian matrix
$$
\partial (p_0,\cdots,p_m)/\partial(z_1,\cdots,z_n)
$$
is not of full rank $m+1$. Suppose the codimension of $J$ is $\geq 2$ in $V$.
Then $I$ is a prime ideal.
\end{theorem}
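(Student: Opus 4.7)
The plan is to upgrade the reducedness conclusion of Theorem~\ref{Serre} to a primeness conclusion. Since the hypothesis $\mathrm{codim}(J)\geq 2$ is stronger than $\geq 1$, Theorem~\ref{Serre} already tells us that $R:=P[n]/I$ is reduced. What remains is to strengthen this to primeness, which I propose to do in two steps: first show that $R$ is \emph{normal} (i.e.\ integrally closed in its total quotient ring $Q(R)$), then use the connectedness of $V$ to pass from ``normal reduced'' to ``integral domain,'' which is exactly the statement that $I$ is prime.

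For the normality step I would invoke Serre's normality criterion: $R$ is normal if and only if it satisfies both $(R_1)$, meaning $R_{\mathcal P}$ is a regular local ring for every prime $\mathcal P$ of height $\leq 1$, and $(S_2)$, a depth condition of order two on localizations. Condition $(R_1)$ comes directly from the hypothesis: by exactly the conormal-sequence analysis that produced~\eqref{exquisite} in the proof of Theorem~\ref{Serre}, the singular locus of $\mathrm{Spec}(R)$ is precisely $J$, which by assumption has codimension $\geq 2$; therefore every prime of height $\leq 1$ avoids $J$, and the corresponding localization is regular. Condition $(S_2)$ comes from Cohen-Macaulayness: since $p_0,\ldots,p_m$ is a regular sequence in the Cohen-Macaulay ring $P[n]$, the quotient $R$ is itself Cohen-Macaulay (this is the same pure-dimension fact flagged in the Macaulay Unmixedness discussion preceding Theorem~\ref{Serre}), and a Cohen-Macaulay ring automatically satisfies $(S_k)$ for every $k$. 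Hence $R$ is normal.

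With normality in hand, I would invoke the structure theorem for Noetherian normal rings: any such ring decomposes uniquely as a finite direct product of normal domains, the factors corresponding bijectively to the connected components of $\mathrm{Spec}(R)$. The geometric content is that in a normal ring the irreducible components must be pairwise disjoint, since an intersection point would force the local ring there to have two distinct minimal primes, contradicting that a normal local ring is a domain. Because $V$ is assumed connected, only one factor can occur, so $R$ is itself a normal domain, i.e.\ $I$ is prime.

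The main obstacle is not combinatorial but expository: the argument leans on two nontrivial commutative-algebra inputs, namely Serre's $(R_1)+(S_2)$ criterion for normality and the decomposition of a Noetherian normal ring into a product of normal domains indexed by the connected components of its spectrum (both found in~\cite[p.~462]{Ei}). The genuine leap from Theorem~\ref{Serre} to Theorem~\ref{Serre1} is precisely what the extra codimension buys us through the $(R_1)$ half of Serre's criterion; everything else, including the identification of the Jacobian locus with the singular locus and the Cohen-Macaulay property of $R$, is already encoded in the machinery developed in the preceding subsection.
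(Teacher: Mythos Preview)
Your proposal is correct and follows the same overall architecture as the paper: establish that $R=P[n]/I$ is normal, then invoke the decomposition of a Noetherian normal ring into a product of normal domains (the paper's Theorem~\ref{normality}) together with connectedness of $V$ to conclude that $R$ is a domain.

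The difference lies in how normality is argued. You cite Serre's $(R_1)+(S_2)$ criterion as a package and check the two conditions separately (Cohen--Macaulayness for $(S_2)$, the codimension-$2$ Jacobian hypothesis for $(R_1)$). The paper instead unpacks this criterion by hand: starting from a non-zerodivisor $p\in R$, it uses unmixedness to write $(p)$ as an intersection of height-one primes $P$, then reruns the conormal-sequence calculation~\eqref{exquisite} to show $\dim_{\kappa(P)}P_P/(P_P)^2=1$, so that each $R_P$ is a discrete valuation ring; from there it proves integral closedness directly (Proposition~1 and Corollary~\ref{R}). Your route is shorter but leans on the named criterion as a black box; the paper's route is more self-contained and makes explicit exactly where the extra unit of codimension is spent, namely in keeping the Jacobian of rank $m+1$ at the generic point of each height-one prime so that $P_P$ becomes principal rather than merely zero.
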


To outline the proof, note that $V$ is reduced by Theorem~\ref{Serre}. Let
$p\in P[n]/I$ be a non-zerodivisor. Then $p_0,\cdots,p_m,p$\, form a regular sequence,
so that the ideal $I^*$ generated by $p_0,\cdots,p_m,p$, in view of $(\dagger)$, is the intersection
of minimal primes $Q_1,\cdots,Q_t$ containing $I^*$,
$$
I^*=\cap_{j=1}^t Q_j,
$$
and the algebraic set $V^*$ defined by $I^*$ is of pure dimension $n-m-2$.
Put intrinsically, this says that the (principal) ideal $(p)$ generated by
$p$ in $R=P[n]/I$ is the intersection of minimal primes $P_j:=Q_j/I$ containing
$p$ in $R$:
\begin{equation}\label{(p)}
(p)=\cap_{j=1}^t P_j.
\end{equation}
For ease of notation, let us denote any of the prime ideals $P_j$ by $P$.

We claim that $P_P$ is also generated by a single element
by~\eqref{exquisite}. The proof proceeds in a way entirely similar to the one
given in the preceding section. First of all, $\Omega^1(\kappa(P))=n-m-2$ because
the variety $V^*$ is of pure dimension $n-m-2$. Moreover, the middle space in~\eqref{exquisite}, as a vector space
over $\kappa(P)$,
has the same dimension $n-m-1$ as in the case of reducedness, because
the codimension 2 condition and the fact that $P$ defines an variety of
codimension 1 imply that the image of $D$ in~\eqref{rightexact} is still of
dimension $m+1$; therefore, the
dimension of $P_P/(P_P)^2$ is of dimension 1 as a vector space
over $\kappa(P)$. This is equivalent to saying that the {\em minimum} number of generators
of $P_P$ is 1, which is a consequence of the fundamental Nakayama lemma 
whose proof we leave to~\cite[p. 105]{Ku}. The claim follows.

So now $P_P=(f)$ in $R_P$. It follows that any element $x\in R_P$ is of the form
$x=uf^n$ for some integer $n\geq 0$ and some unit $u\in R_P$, i.e.,
$f$ is a {\em local uniformizing parameter}
for $R_P$. Indeed, since the units of $R_P$
constitute $R_P\setminus P_P$, an element $x\in R_P$ is either a unit, in which case we are done,
or $x\in P_P=(f)$, in which case $x=ff_1$ for some $f_1\in R_P$. Either $f_1$
is a unit and we are done, or $f_1=ff_2$ for some $f_2\in R_P$ with $x=f^2f_2$,
etc. It follows that
we have an ascending chain of ideals
$$
(f_1)\subset(f_2)\subset(f_3)\cdots,
$$
so that it must stabilize at some smallest $n$ (the {\em Noetherian} condition; a ring with the condition is called a {\em Notherian} ring). We obtain $x=f^nu$ for some unit $u$.
With this there comes the following simple but important observation.

\begin{proposition} Let $Q(R_P)$ be the quotient ring of $R_P$. Suppose
$a/b\in Q(R_P)$
satisfies a monic polynomial
\begin{equation}\label{monic}
t^k+c_{k-1}t^{k-1}+\cdots+c_1t+c_0
\end{equation}
in $t$, where $c_0,\cdots,c_{k-1}\in R_P$. Then $a/b\in R_P$.
\end{proposition}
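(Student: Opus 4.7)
The plan is to exploit the local uniformizer $f$ for $R_P$ that was just constructed: every nonzero element of $R_P$ has a presentation $u f^n$ with $u$ a unit and $n \geq 0$. I would first verify that this presentation is unique, which reduces to showing that $f$ is a non-zerodivisor in $R_P$. This holds because $R_P$ inherits reducedness from $R$ (localization preserves reducedness), so any relation $f^{m+1} = 0$ would force $f = 0$, hence $P_P = 0$; but $P$ is a height-one prime over the non-zerodivisor $p$, so $P_P \neq 0$, a contradiction.

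With uniqueness in hand, define a valuation $v : R_P \setminus \{0\} \to \mathbb{Z}_{\geq 0}$ by $v(u f^n) = n$, and extend it to $v(a/b) = v(a) - v(b)$ on $Q(R_P)$, routinely checking independence of representative. The multiplicative law $v(xy) = v(x) + v(y)$ is immediate, and the ultrametric inequality $v(x+y) \geq \min(v(x),v(y))$ follows at once from the $uf^n$ presentation, with equality whenever $v(x) \neq v(y)$.

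Now suppose, for contradiction, that $a/b \in Q(R_P)$ does not lie in $R_P$, so $N := v(a/b) < 0$. Substituting $a/b$ into the given monic relation and taking valuations, the leading term $(a/b)^k$ has valuation $kN$, whereas each subordinate term satisfies
$$
v\bigl(c_i (a/b)^i\bigr) = v(c_i) + iN \geq iN \geq (k-1)N > kN,
$$
using $v(c_i) \geq 0$ (since $c_i \in R_P$) and $N < 0$. Thus the leading term has strictly smaller valuation than every other term in the sum, so the ultrametric inequality forces the entire sum to have valuation $kN$ and in particular to be nonzero, contradicting the monic relation. Hence $v(a/b) \geq 0$, i.e.\ $a/b \in R_P$.

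The main obstacle is genuinely just the non-zerodivisor check for $f$; once that is secured, the remainder is the textbook verification that a discrete valuation ring is integrally closed in its ring of fractions, turned available to us precisely by the structural fact, extracted from \eqref{exquisite} and Nakayama, that $P_P$ is generated by a single uniformizing parameter.
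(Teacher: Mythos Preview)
Your argument is correct and is essentially the paper's own proof recast in valuation language: the paper writes $a=f^{l}u,\ b=f^{m}v$, observes that $a/b\notin R_P$ forces $m>l$, clears denominators in the monic relation to get $w^k\in(f)$ with $w$ a unit, and reads off the contradiction---which is exactly your ultrametric comparison $v((a/b)^k)=kN<v(\text{remaining terms})$. Your explicit check that $f$ is a non-zerodivisor (via reducedness of $R_P$ and $P_P\neq 0$) is a point the paper leaves implicit, but otherwise the two arguments coincide.
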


To see this, write $a=f^lu$ and $b=f^mv$ for some units $u,v\in R_P$.
If $a/b\notin R_P$, then we have
$m>l$ so that $a/b=w/f^s$ with $s>0$ and $w$ a unit, which we
substitute into~\eqref{monic} to obtain
$$
(w/f^s)^k+c_{k-1}(w/f^s)^{k-1}+\cdots+c_1(w/f^s)+c_0=0;
$$
multiplying both sides by $f^{sk}$ we derive
$w^k=fg$ for some $g\in R_P$. This forces
$f\in P_P$ to be a unit, which is a contradiction. So, we conclude that
$a/b\in R_P$.

\begin{corollary}\label{R} It follows that $R$ satisfies the same property, namely,
that if $q/p\in Q(R)$, the quotient ring of $R$, satisfies a monic polynomial with coefficients
in $R$, then $q/p\in R$.
\end{corollary}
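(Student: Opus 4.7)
The plan is to bootstrap the preceding Proposition, which controls each localization $R_P$ at a minimal prime over $(p)$, up to a statement on $R$ itself by combining prime avoidance with the unmixedness of the principal ideal $(p)$ in $R$.

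Suppose $q/p \in Q(R)$ satisfies a monic relation $t^k + c_{k-1} t^{k-1} + \cdots + c_0 = 0$ with $c_i \in R$. For each minimal prime $P_j$ in the decomposition $(p) = \bigcap_{j=1}^t P_j$ established above, localizing at $P_j$ produces the same monic relation for $q/p$, now viewed in $Q(R_{P_j})$ with coefficients in $R_{P_j}$. The Proposition then yields $q/p \in R_{P_j}$ for every $j$. Unraveling this equality on the ground, for each $j$ there exist $s_j \in R \setminus P_j$ and $a_j \in R$ with $s_j q = p a_j$ in $R$.

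Consider the ideal $J := \{r \in R : rq \in pR\}$. By construction $s_j \in J$ but $s_j \notin P_j$ for every $j$, so $J$ is contained in none of the $P_j$. Prime avoidance then produces an $s \in J$ lying in none of the $P_j$. The crucial input now is that the associated primes of the principal ideal $(p)$ in $R$ are exactly the minimal primes $P_1,\ldots,P_t$: this follows from Macaulay's Unmixedness Theorem applied to the regular sequence $p_0,\ldots,p_m,p$ in $P[n]$, which rules out embedded components of $(p)$ in $R$. Consequently $\bigcup_j P_j$ coincides with the set of zero-divisors modulo $(p)$, so $s$ is a non-zero-divisor in $R/(p)$. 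From $sq \in pR$ we then conclude $q \in pR$, i.e., $q/p \in R$.

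The main obstacle is the passage from the ``locally in $pR$'' data furnished by prime avoidance to the desired ``globally in $pR$'' conclusion; this hinges decisively on the absence of embedded associated primes for $(p)$, which is precisely where the Cohen--Macaulayness of $R$ (a consequence of $I$ being generated by a regular sequence) enters through the same Macaulay Unmixedness Theorem already invoked in establishing the Proposition.
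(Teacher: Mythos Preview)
Your argument is correct. Both you and the paper reduce to the Proposition on the localizations $R_{P_j}$ at the minimal primes over $(p)$, but the local-to-global passage is organized differently. The paper argues by contradiction: assuming $q\notin(p)$, it invokes the decomposition $(p)=\cap_j P_j$ to find some $P$ with $q\notin P$; then $q$ becomes a unit in $R_P$ while $p\in P_P$, forcing $q/p\notin R_P$, which contradicts the Proposition directly. Your route is the forward direction: from $q/p\in R_{P_j}$ for every $j$ you build the transporter $J=((p):q)$, use prime avoidance to extract $s\in J\setminus\bigcup_j P_j$, and then invoke unmixedness of $(p)$ (so that $\bigcup_j P_j$ is exactly the set of zero-divisors modulo $(p)$) to conclude $q\in(p)$. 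The paper's version is a quick four-line contradiction; yours is a bit longer but makes explicit which consequence of Cohen--Macaulayness is doing the work---namely the absence of embedded primes for $(p)$, rather than the stronger radicality assertion $(p)=\cap_j P_j$---and runs cleanly on that weaker hypothesis alone.
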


Indeed, if $q/p\in Q(R)\setminus R$ for a non-zerodivisor $p$,
then $q\notin(p)$. Following~\eqref{(p)}, we see $q\notin P_j$ for some j; call it $P$ for convenience.
It follows that $q\notin P_P$ since $q$ is a unit. Consequently,
$q/p\notin R_P$; for otherwise $q/p=a/b$ implies $q=pa/b\in P_P$, a contradiction.
But then $q/p$ satisfies a monic polynomial in $Q(R_P)$, which is induced by
the polynomial that $q/p$ satisfies in $Q(R)$; therefore, by the preceding proposition
$q/p\in R_P$, a contradiction.

\vspace{2mm}

In accordance with the corollary, we make the following definition.

\begin{definition} A reduced commutative ring $R$ with identity is {\em normal}
if whenever $x\in Q(R)$ satisfies a monic polynomial with coefficients in $R$,
there follows $x\in R$.
\end{definition}

Then Serre's criterion of primeness is a consequence of the following:

\begin{theorem}\label{normality} A Notherian normal ring $R$ is a direct product of normal domains. 
\end{theorem}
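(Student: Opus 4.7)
The plan is to decompose the total quotient ring $Q(R)$ as a finite product of fields indexed by the minimal primes of $R$, lift the corresponding orthogonal idempotents into $R$ itself using the definition of normality, and finally verify that each resulting direct factor is a normal domain. Since $R$ is Noetherian and reduced, it has only finitely many minimal primes $P_1,\dots,P_s$, and $(\dagger)$ applied to $(0) = \sqrt{(0)}$ yields $(0) = \bigcap_i P_i$; moreover the set of zero-divisors of $R$ coincides with $\bigcup_i P_i$, since in a reduced Noetherian ring the associated primes of $(0)$ are precisely its minimal primes. For each $i$, reducedness of $R_{P_i}$ together with the minimality of $P_i$ forces $(P_i)_{P_i} = 0$, so $R_{P_i} = \kappa(P_i)$ is a field. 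Letting $S := R \setminus \bigcup_i P_i$ be the multiplicative set of non-zerodivisors, a prime-avoidance argument combined with the fact that the extended ideals $P_i\, S^{-1}R$ are pairwise comaximal gives
$$
Q(R) \;=\; S^{-1}R \;\cong\; \prod_{i=1}^s \kappa(P_i).
$$

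Under this isomorphism, let $e_i \in Q(R)$ correspond to the standard idempotent having $1$ in the $i$-th slot and $0$ elsewhere. Then $e_i^2 - e_i = 0$, so $e_i$ is a root of the monic polynomial $t^2 - t \in R[t]$, and normality of $R$ forces $e_i \in R$. The $e_i$ thus constitute a complete orthogonal system of idempotents in $R$, yielding the direct product decomposition $R \cong \prod_{i=1}^s R_i$ with $R_i := e_i R$. Because $e_i$ maps to $1$ in $\kappa(P_i)$ and to $0$ in $\kappa(P_j)$ for $j \neq i$, each minimal prime $P_j$ of $R$ corresponds under this decomposition to an ideal that is the full factor in every slot except the $j$-th, where it contributes $(0)$; consequently each $R_i$ has $(0)$ as its unique minimal prime, and since reducedness passes to each factor, each $R_i$ is an integral domain.

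Finally, each $R_i$ is normal because a polynomial over the product ring $\prod R_i$ is monic precisely when each of its components is monic, so an element $y = (y_1,\dots,y_s) \in Q(R) = \prod Q(R_i)$ is integral over $R$ if and only if each $y_i$ is integral over $R_i$; normality of $R$ therefore descends coordinate-wise. The main obstacle in the argument is the clean identification $Q(R) \cong \prod \kappa(P_i)$: although a standard fact for reduced Noetherian rings, it requires careful bookkeeping of prime avoidance and of the comaximality of the extended minimal primes after inverting the non-zerodivisors. Once this structural identification is secured, the remaining steps---lifting idempotents via normality and verifying that each factor is a normal domain---are essentially formal.
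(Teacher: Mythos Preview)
Your argument is correct and is precisely the standard idempotent-lifting proof: decompose $Q(R)$ as a finite product of fields, observe that the resulting idempotents satisfy the monic polynomial $t^2-t$ and hence lie in $R$ by normality, and conclude that $R$ splits as a product of reduced rings each embedding in a field. The paper itself does not supply a proof of this theorem but simply cites Kunz~\cite[pp.~85--86]{Ku}, where exactly this argument appears; so your proposal matches the intended reference.
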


To see that Theorem~\ref{normality} implies Serre's criterion of normality, note first that the ring $R=P[n]/I$ under consideration
is normal by Corollary~\ref{R}. Thus Theorem~\ref{normality} concludes
that the variety $V$ defined by $I$ is a disjoint union of
irreducible varieties,
so that $V$ must be irreducible itself because it is connected. In other words,
$I$ is a prime ideal.

On the other hand, Theorem~\ref{normality} is a standard exercise in commutative algebra.
We refer the reader to~\cite[pp. 85-86]{Ku} for a proof.

Alternatively, we can understand normality from the function-theoretic point of view.
Recall that a function
$f$ is {\em weakly holomorphic} in an open set $O$ of $V$ if it is holomorphic on $O\setminus{\mathcal S}$
and is locally bounded in $O$. Passing to the limit as $O$ shrinks to a point $p$,
we can talk about the germs of weakly holomorphic functions at $p$. The variety
is said to be {\em normal} at $p$ if the germs of weakly holomorphic functions at $p$ coincide
with the germs of holomorphic functions at $p$. That is, the Riemann extension
theorem holds true in the germs of neighborhoods around $p$. $V$ is said to be
{\em normal} if it is normal at all its points.

If $V$ is normal, then its irreducible components are
disconnected; or else
a constant function with different values on different local irreducible branches,
which is not even continuous, would give
rise to a weakly holomorphic function that could be extended to a holomorphic function,
a piece of absurdity. This is the geometric meaning of Theorem~\ref{normality}.
See~\cite[p. 191]{Gu} for details.

\subsection{Algebraic independence of regular sequences} The 
Taylor expansion of~\eqref{taylor} can be viewed as follows. Let $I=(z_1,\cdots,z_k)$
be the ideal generated by the regular sequence $z_1,\cdots,z_k$.
In~\eqref{taylor}, we can think of
$$
f_0(x)\in P[n]/I, \quad f_1(x)y\in I/I^2,
\quad f_2(x)y^2\in I^2/I^3,\;\;\text{etc}.
$$
(Precisely, $y$ should be replaced by $y+I^2$.) On the other hand, we can also think of $z_{k+1},\cdots,z_n$ as generating $P[n]/I$,
so that $f_0(x),f_1(x),\cdots\in P[n]/I$. Hence, the polynomial
$f(x,y)\in P[n]$ written in~\eqref{taylor} can also be thought of
as a polynomial in $k$ formal variables $t_1,\cdots,t_k$ with coefficients in
$P[n]/I$, for which the expansion~\eqref{taylor} is the evaluation when we set
$t_1=z_1,t_2=z_2,\cdots,t_k=z_k\in I/I^2$. In other words, there is an isomorphism

\begin{eqnarray}\label{TAYLOR}
\aligned
&P[n]/I[t_1,\cdots,t_k]\longrightarrow P[n]/I\oplus I/I^2
\oplus I^2/I^3\oplus \cdots,\\
&t_i\longmapsto z_i+I^2,
\endaligned
\end{eqnarray}
where the left hand side is the polynomial ring with coefficients in $P[n]/I$
and the direct sum module on the right hand side consists of elements whose components
are zero eventually.

It turns out~\eqref{TAYLOR} is true for any regular sequence $p_1,\cdots,p_k\in P[n]$
and~\eqref{TAYLOR} continues to hold when we replace $z_1,\cdots,z_k$ by
$p_1,\cdots,p_k$, respectively, with the evaluation map
\begin{equation}\label{eval}
t_i\longmapsto p_i+I^2,1\leq i\leq k.
\end{equation}
Note that the evaluation in~\eqref{eval} is clearly
surjective. Since a polynomial is the sum of its homogeneous terms, the injectivity
of the evaluation comes down to proving the following:

\begin{proposition}\label{qp} Let $F(t_1,\cdots,t_k)$ be a homogeneous polynomial of
degree $d$ in $k$ variables $t_1,\cdots,t_k$ with coefficients in $P[n]$.
Suppose the evaluation results in $F(p_1,\cdots,p_k)\in I^{d+1}$. Then 
all the coefficients of $F$ belong to $I=(p_1,\cdots,p_k)$.
\end{proposition}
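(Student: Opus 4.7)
Plan: The proposition is exactly the injectivity of the evaluation map in~\eqref{TAYLOR} for the general regular sequence $p_1,\ldots,p_k$, so it is equivalent to the classical fact that the associated graded ring of an ideal generated by a regular sequence is a polynomial ring over the quotient. My plan is to prove it by induction on $k$, the length of the regular sequence, with a nested induction on the degree $d$ inside the inductive step; the inductive hypothesis at stage $k-1$ yields the analogous decomposition for $J=(p_1,\ldots,p_{k-1})$, which is what supplies the auxiliary facts used at stage $k$.

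For the base case $k=1$, $F(t_1)=a\, t_1^d$ with $a\in P[n]$, and $a p_1^d \in (p_1)^{d+1}$ gives $a p_1^d = b p_1^{d+1}$, so $(a - b p_1)p_1^d = 0$; since $p_1$ is a non-zerodivisor and hence so is $p_1^d$, we get $a=bp_1\in I$. For the inductive step I set $J=(p_1,\ldots,p_{k-1})$ and decompose
\[
F = t_k\cdot G(t_1,\ldots,t_k) + H(t_1,\ldots,t_{k-1}),
\]
where $G$ is homogeneous of degree $d-1$ in all $k$ variables and $H$, homogeneous of degree $d$, collects the monomials of $F$ free of $t_k$. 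The relation $p_k G(p) + H(p_1,\ldots,p_{k-1})\in I^{d+1}$, projected modulo $(p_k)$, places $\bar H(\bar p_1,\ldots,\bar p_{k-1})$ inside $\bar J^{d+1}$ in $P[n]/(p_k)$; applying the inductive hypothesis there—where $\bar p_1,\ldots,\bar p_{k-1}$ remains regular by permutability of homogeneous regular sequences—forces the coefficients of $H$ into $J+(p_k)=I$. Consequently $H(p_1,\ldots,p_{k-1})\in I\cdot J^{d}\subseteq I^{d+1}$, so $p_k G(p)\in I^{d+1}$; writing this element as $p_k A + B$ with $A\in I^d$ and $B\in J^{d+1}$, the equation $p_k(G(p)-A)=B\in J^{d+1}$ together with the non-zerodivisor property of $p_k$ on $P[n]/J^{d+1}$ forces $G(p)\in I^d$, and the inner induction on $d$ then places the coefficients of $G$ in $I$ as well.

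The main obstacle is establishing the two auxiliary facts the argument leans on: (i) that $\bar p_1,\ldots,\bar p_{k-1}$ remains a regular sequence in $P[n]/(p_k)$, and (ii) that $p_k$ is a non-zerodivisor on $P[n]/J^r$ for every $r\geq 1$. Both are consequences of the inductive hypothesis for $p_1,\ldots,p_{k-1}$, which produces the isomorphism
\[
P[n]/J \oplus J/J^2 \oplus J^2/J^3 \oplus \cdots \;\cong\; (P[n]/J)[s_1,\ldots,s_{k-1}];
\]
under this isomorphism, each graded piece $J^r/J^{r+1}$ is a free $P[n]/J$-module, so multiplication by the non-zerodivisor $\bar p_k\in P[n]/J$ is necessarily injective on it, and this injectivity lifts via the $J$-adic filtration to the required non-zerodivisor statement for $p_k$ on $P[n]/J^r$; item (i) is the parallel graded permutability theorem. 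Dispatching these two items cleanly is the technical crux of the argument, but once they are in hand the double induction closes and yields the proposition.
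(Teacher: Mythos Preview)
The paper does not prove this proposition directly; it reduces it to the special case $F(p_1,\ldots,p_k)=0$ (Proposition~\ref{homog}) by subtracting from $F$ a degree-$d$ polynomial with coefficients already in $I$, and then cites~\cite[p.~153]{Ku} for that case. Your double induction on $(k,d)$ is therefore a different and more self-contained route, close in spirit to the standard proof of Rees' theorem.

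There is, however, a genuine gap in the induction. In the inductive step you apply the length-$(k-1)$ hypothesis to $\bar p_1,\ldots,\bar p_{k-1}$ inside $P[n]/(p_k)$, but your hypothesis is formulated only for regular sequences in $P[n]$ itself; since $P[n]/(p_k)$ is in general not a polynomial ring, the hypothesis does not transfer. The standard remedy is to state and prove the proposition for regular sequences in an arbitrary commutative ring, so that passing to a quotient is legitimate. A related problem is your appeal to ``permutability of homogeneous regular sequences'': the proposition as stated does not assume the $p_i$ are homogeneous, and regular sequences in $P[n]$ are not permutable in general (for instance $x,\,y(1-x),\,z(1-x)$ is regular in $P[3]$ while $y(1-x),\,z(1-x),\,x$ is not, since $z(1-x)\cdot y\equiv 0$ mod $y(1-x)$ with $y\notin(y(1-x))$). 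Hence, absent homogeneity, the regularity of $\bar p_1,\ldots,\bar p_{k-1}$ in $P[n]/(p_k)$ cannot be obtained from permutability and must come from elsewhere---again, running the whole induction over arbitrary rings resolves this. Your auxiliary fact~(ii), by contrast, is correctly derived from the length-$(k-1)$ hypothesis on $J$ via the freeness of each $J^r/J^{r+1}$ over $P[n]/J$, and the rest of your argument (the decomposition $I^{d+1}=p_kI^d+J^{d+1}$ and the inner induction on $d$ applied to $G$) is sound.
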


Since any homogeneous element $f\in I^{d+1}$ can be written as a
homogeneous $G(p_1,\cdots,p_k)$ of degree $d$ with coefficients in $I$, if we write
$F\in I^{d+1}$ as a sum of homogeneous terms $G_1,\cdots,G_m$ of degrees $\geq d+1$,
$$
F(p_1,\cdots,p_k)=G_1(p_1,\cdots,p_k)+\cdots+G_m(p_1,\cdots,p_k),
$$
and then regard each $G_j$ as a homogeneous polynomial of degree $d$ in
$t_1,\cdots,t_k$ with coefficients in $I$,
then
$$
H(t_1,\cdots,t_k):=F(t_1,\cdots,t_k)-G_1(t_1,\cdots,t_k)-\cdots-
G_m(t_1,\cdots,t_k)
$$
is homogeneous of degree $d$ with $H(p_1,\cdots,p_k)=0$. If we can establish that
this forces all the coefficients of $H$ to be in $I$, it will follow that
all the coefficients
of $F$ fall in $I$. Therefore, the above proposition is equivalent to:

\begin{proposition}\label{homog} Let $p_1,\cdots,p_k$ be a regular sequence in $P[n]$.
Let $F(t_1,\cdots,t_k)$ be a homogeneous polynomial of degree $d$
in $k$ variables with coefficients in $P[n]$.
Suppose $F(p_1,\cdots,p_k)=0$. Then 
all the coefficients of $F$ belong to $I=(p_1,\cdots,p_k)$.
\end{proposition}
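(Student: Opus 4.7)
The plan is to proceed by induction on $k$, the length of the regular sequence. The base case $k=1$ is immediate: a homogeneous polynomial of degree $d$ in one variable is $F(t_1)=at_1^d$ with $a\in P[n]$, and $ap_1^d=0$ together with the fact that $p_1$, and hence $p_1^d$, is a non-zerodivisor forces $a=0\in(p_1)$.

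For the inductive step, assume Proposition~\ref{homog}---and, via the equivalence to Proposition~\ref{qp} noted just before its statement, also Proposition~\ref{qp}---for the regular sequence $p_1,\ldots,p_{k-1}$. Set $J:=(p_1,\ldots,p_{k-1})$, so that by Definition~\ref{def}, $p_k$ is a non-zerodivisor modulo $J$. Arrange $F$ in powers of $t_k$,
$$
F(t_1,\ldots,t_k)=\sum_{j=0}^{d}F_j(t_1,\ldots,t_{k-1})\,t_k^{j},\qquad \deg F_j=d-j,
$$
and I would show, by descending induction on $j$ from $j=d$ down to $j=0$, that every coefficient of $F_j$ lies in $I=(p_1,\ldots,p_k)$.

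For $j=d$ the coefficient $F_d$ is a scalar in $P[n]$, and since $F_i(p_1,\ldots,p_{k-1})\in J^{d-i}\subseteq J$ for $i<d$, the hypothesis $F(p_1,\ldots,p_k)=0$ reduces modulo $J$ to $F_d\,p_k^{d}\equiv 0\pmod{J}$; the regularity of $p_k$ modulo $J$ then yields $F_d\in J\subseteq I$. Writing $F_d=\sum_{\ell<k} b_\ell p_\ell$, I replace $F$ by
$$
\widetilde F(t):=F(t)-F_d\,t_k^{d}+p_k\sum_{\ell<k} b_\ell\,t_\ell\,t_k^{d-1},
$$
which is still homogeneous of degree $d$ and still satisfies $\widetilde F(p_1,\ldots,p_k)=0$ (the added terms evaluate to the subtracted ones), yet whose $t_k^d$-coefficient is now zero. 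Iterating this device downward in $j$, each descent step produces a modified polynomial of the same degree vanishing at $(p_1,\ldots,p_k)$ whose top $t_k$-coefficient $\widetilde F_j$ satisfies $\widetilde F_j(p_1,\ldots,p_{k-1})\,p_k^{j}\in J^{d-j+1}$, since all other surviving terms lie in higher powers of $J$. Setting $G(t_1,\ldots,t_{k-1}):=p_k^{j}\widetilde F_j(t_1,\ldots,t_{k-1})$---homogeneous of degree $d-j$ in $k-1$ variables with $G(p_1,\ldots,p_{k-1})\in J^{(d-j)+1}$---the outer inductive hypothesis in the form of Proposition~\ref{qp} for $p_1,\ldots,p_{k-1}$ forces every coefficient of $G$ into $J$; cancelling the non-zerodivisor $p_k^{j}$ modulo $J$ gives the same for $\widetilde F_j$, and unwinding the modifications brings the conclusion back to the coefficients of the original $F_j$.

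The main obstacle I anticipate is the bookkeeping in the descent step: each modification injects new terms of the shape $p_k\cdot(\text{monomial})$ whose coefficients are already in $I$, and one must verify that the effect on each lower $t_k$-power is an error lying in $I$ that can be absorbed cleanly before applying the modular argument to the next coefficient. Conceptually, the whole argument is equivalent to showing the injectivity of the canonical surjection $(P[n]/I)[T_1,\ldots,T_k]\to \mathrm{gr}_I(P[n])$ sending $T_i\mapsto p_i+I^2$---the standard characterization of regular sequences via the polynomial structure of their associated graded ring (cf.\ Matsumura, \textit{Commutative Ring Theory}, Theorem 16.2).
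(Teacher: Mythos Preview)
The paper does not supply its own proof of this proposition; it simply refers the reader to Kunz, p.~153. Your argument is a correct rendition of the standard inductive proof that a regular sequence is quasi-regular (equivalently, that the associated graded ring $\mathrm{gr}_I(P[n])$ is the polynomial ring $(P[n]/I)[T_1,\ldots,T_k]$, as you yourself note at the end), and it is in the same spirit as the argument found in Kunz. The bookkeeping you flag as the main obstacle is in fact harmless: at the stage where you treat the $t_k^j$-coefficient, the only coefficients that have been altered are those for $t_k^{j+1},\ldots,t_k^{d}$, since each ``kill'' step touches only the immediately lower power of $t_k$; hence for $i<j$ the terms $F_i(p_1,\ldots,p_{k-1})\,p_k^{i}$ are still the original ones and lie in $J^{d-i}\subseteq J^{d-j+1}$ as you claim. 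The modifications themselves add only terms whose coefficients are multiples of $p_k$, so once the coefficients of the modified $\widetilde F_j$ are shown to lie in $J$, those of the original $F_j$ lie in $J+(p_k)=I$.
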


We refer the reader to~\cite[p. 153]{Ku} for a short proof. Let us look at an important
application next.

\subsection{Method for generating regular sequences} Granted Serre's criteria
of reducedness and normality, checking that a sequence
$p_0,\cdots,p_m\in P[n]$ form a
regular sequence is by no means easy. The first condition of forming a regular
sequence is
that $p_0=\cdots=p_m=0$ defines a nonempty variety, or equivalently,
that $(p_0,\cdots,p_m)\neq P[n]$, which is already not that obvious to conclude.
However, if we now stipulate that $p_0,\cdots,p_m$ all be homogeneous of degree $\geq 1$, then automatically
$p_0=\cdots=p_m=0$ defines a connected and nonempty variety $V$, because 0
clearly belongs to $V$ and furthermore $V$ is connected since it is a
cone. Thus we can rephrase Serres's criterion of primeness in this case as follows:

\begin{theorem}\label{Serre2}\label{refined} Let $I$ be the ideal generated by
a regular sequence $p_0,\cdots,p_m,m+1\leq n,$ of homogeneous polynomials of degree $\geq 1$
in $P[n]$
that defines a variety $V$. Let $J$ be the subvariety of\, $V$
consisting of all points of\; $V$ where the Jacobian matrix
$$
\partial (p_0,\cdots,p_m)/\partial(z_1,\cdots,z_n)
$$
is not of full rank $m+1$. Suppose the codimension of $J$ is $\geq 2$ in $V$.
Then $I$ is a prime ideal.
\end{theorem}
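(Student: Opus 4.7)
The plan is to recognize that Theorem~\ref{refined} is essentially a direct corollary of Theorem~\ref{Serre1}: the regular-sequence hypothesis and the codimension-$\geq 2$ Jacobian condition are transferred verbatim, so the only thing left to verify is that the connectedness hypothesis of Theorem~\ref{Serre1} is automatically satisfied when the generators $p_0,\ldots,p_m$ are homogeneous of degree $\geq 1$.

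First I would observe that $V$ is nonempty, since each homogeneous $p_j$ of positive degree vanishes at the origin $0\in\mathbb{C}^n$, so $0\in V$. For connectedness, I would exploit the cone structure of $V$: if $z\in V$, then for every $\lambda\in\mathbb{C}$,
\[
p_j(\lambda z) \;=\; \lambda^{\deg p_j}\,p_j(z) \;=\; 0,
\]
so $\lambda z\in V$. In particular the line segment $\{tz:t\in[0,1]\}$ lies entirely in $V$ and joins $z$ to $0$, showing that $V$ is path-connected in the classical Euclidean topology.

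I do not anticipate any serious obstacle here: the content of this theorem is purely that homogeneity automatically delivers the connectedness which Theorem~\ref{Serre1} required as a separate input. The one pedantic point, which I would treat briefly, is the passage from classical connectedness to Zariski connectedness of $V$; this is automatic, because any decomposition of $V$ into two disjoint nonempty Zariski-closed subsets would also be a decomposition into two disjoint nonempty Euclidean-closed subsets, contradicting the path-connectedness established above. With connectedness in hand, Theorem~\ref{Serre1} applies and yields that $I$ is prime.
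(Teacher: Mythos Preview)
Your proposal is correct and follows exactly the paper's own approach: the paragraph preceding Theorem~\ref{refined} derives it from Theorem~\ref{Serre1} by noting that homogeneous generators of degree $\geq 1$ force $0\in V$ and that $V$ is a cone, hence connected. Your write-up is in fact slightly more detailed than the paper's, since you make the path-connectedness explicit and address the Euclidean/Zariski connectedness point.
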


From this we devised a criterion in~\cite{CCJ} and developed it further in~\cite{Ch1},~\cite{Ch3} to
construct regular
sequences in $P[n]$ that fits perfectly in the classification scheme of isoparametric
hypersurfaces.

\begin{lemma}\label{generatereg} Let $p_0,\cdots,p_m\in P[n]$ be linearly independent
homogeneous polynomials of equal degree $\geq 1$.
For each $0\leq k\leq m-1$, let $V_k$ be the variety defined by $p_0=\cdots=p_k=0$,
and let $J_k$ be the subvariety of\, $V_k$, where the Jacobian
$$
\partial(p_0,\cdots,p_k)/\partial(z_1,\cdots,z_n)
$$
is not of full rank $k+1$. If the codimension of $J_k$ in $V_k$ is $\geq 2$
for all $0\leq k\leq m-1$, then $p_0,\cdots,p_m$ form a regular sequence.
\end{lemma}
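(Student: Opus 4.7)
My plan is to induct on $k$, showing in succession that $p_0,\ldots,p_k$ forms a regular sequence for each $0 \leq k \leq m$. The base case $k=0$ is immediate from Example~\ref{reg}, a single nonconstant polynomial being automatically a regular sequence. For the inductive step, assume $p_0,\ldots,p_k$ is a regular sequence for some $k \leq m-1$. Since these polynomials are homogeneous of degree $\geq 1$ and, by hypothesis, the codimension of $J_k$ in $V_k$ is $\geq 2$, I will invoke Theorem~\ref{refined} to conclude that $(p_0,\ldots,p_k)$ is a prime ideal, so that $R_k := P[n]/(p_0,\ldots,p_k)$ is a domain.

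To extend to $p_0,\ldots,p_{k+1}$, I need to verify the two defining conditions of a regular sequence: that the variety $V_{k+1}$ is nonempty, and that $p_{k+1}$ is a non-zerodivisor in $R_k$. The first is automatic since the origin lies in $V_{k+1}$ by homogeneity of positive degree. Because $R_k$ is a domain, the second reduces to the single assertion that $p_{k+1} \notin (p_0,\ldots,p_k)$, i.e., $p_{k+1}$ is nonzero in the quotient.

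To rule out this membership, I will argue by contradiction: if $p_{k+1} = \sum_{i=0}^k f_i\, p_i$ in $P[n]$, then taking the degree-$d$ homogeneous component of both sides (where $d$ is the common degree of the $p_i$) forces only the degree-$0$ parts of the $f_i$ to contribute, producing a linear relation $p_{k+1} = \sum_{i=0}^k c_i\, p_i$ with $c_i \in \mathbb{C}$, contradicting the assumed linear independence of $p_0,\ldots,p_{k+1}$. This closes the induction, yielding a regular sequence of length $m+1$ at $k = m-1$. The substantive algebraic content has already been supplied by Theorem~\ref{refined}, which converts the codimension-$2$ input into primeness at each stage; the only genuine input of the lemma itself is the observation that equal degree together with homogeneity collapses \emph{ideal membership} in $(p_0,\ldots,p_k)$ to \emph{linear dependence} over $\mathbb{C}$, a reduction that would fail without either assumption. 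I do not anticipate any obstacle beyond verifying that the hypotheses line up for each $k \leq m-1$.
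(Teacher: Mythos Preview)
Your proof is correct and follows essentially the same inductive route as the paper: establish that $p_0,\ldots,p_k$ is a regular sequence, apply Theorem~\ref{refined} with the codimension-$2$ hypothesis on $J_k$ to make $I_k=(p_0,\ldots,p_k)$ prime, and then use equal degree plus linear independence to rule out $p_{k+1}\in I_k$, hence $p_{k+1}$ is a non-zerodivisor modulo $I_k$. Your phrasing via ``$R_k$ is a domain, so it suffices that $p_{k+1}\notin I_k$'' is in fact a slight streamlining of the paper's version, which passes through Nullstellensatz and vanishing on $V_k$ to reach the same conclusion.
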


Indeed, Theorem~\ref{refined} applied to $p_0$ implies that $p_0$ is prime
and clearly $p_0$ forms a regular sequence. So, the statement $S(k)$ that the ideal
$I_k:=(p_0,\cdots,p_k)$ is prime and $p_0,\cdots,p_k$ form a regular sequence
holds for $k=0$.

Suppose the statement $S(k)$ holds. We show that
$p_{k+1}$ is not a zero divisor of $P[n]/I_k$. Let us assume
$$
p_{k+1}f=p_0f_0+p_1f_1+\cdots+p_kf_k
$$
for some $f,f_0,\cdots,f_k\in P[n]$. If $p_{k+1}$ vanishes entirely on $V_k$, then $p_{k+1}\in I_k$ by Nullstellensatz
as $I_k$ is a prime ideal. But then
$$
p_{k+1}=p_0g_0+\cdots+p_kg_k
$$
for some $g_0,\cdots,g_k\in P[n]$. However, since $p_0,\cdots,p_k,p_{k+1}$
are homogeneous of the same degree, we conclude that $g_0,\cdots,g_k$
are constants, which forces $p_0,\cdots,p_k,p_{k+1}$ to be linearly dependent.
This is a contradiction. Thus $p_{k+1}$ cannot vanish identically on $V_k$,
which implies that $f$ must vanish identically on $V_k$, so that $f\in I_k$.
Now that $p_{k+1}$ is not a zero divisor of $P[n]/I_k$, it follows that
$p_0,\cdots,p_{k+1}$ form a regular sequence, which, together with
the fact that $J_{k+1}$ is of codimension 2 in $V_{k+1}$, make
$I_{k+1}$ a prime ideal by Theorem~\ref{refined}, so that the statement $S(k+1)$ is true,
as long as $k\leq m-2$.

Lastly, when we reach that $I_{m-1}$ is prime, the
scheme results in the conclusion that $p_0,\cdots,p_m$ form a regular sequence.

\subsection{The syzygy of a regular sequence} Let $p_0,\cdots,p_m$
be a sequence in $P[n]$. The ideal
$$
Syz:=\{(q_0,\cdots,q_m):p_0q_0+\cdots+p_mq_m=0\}
$$
is called the {\em first syzygy ideal} of $p_0,\cdots,p_m$. Let $e_j:=(0\cdots,1,0\cdots)$,
where the only nonzero one $(=1)$ of the $m+1$ entries is at the $jth$ slot.
It is clear that $p_je_i-p_ie_j\in Syz$. $Syz$ is said to be {\em trivial}
if it is generated by $p_je_i-p_ie_j,i\neq j,$ in which case all $(q_0,\cdots,q_m)\in Syz$
are of the form

$$
q_a=\sum_{b=0}^m r_{ab}q_b,\quad r_{ab}=-r_{ba}.
$$

\begin{proposition}\label{syzy} The first syzygy ideal generated by a
regular sequence in $P[n]$ is trivial.
\end{proposition}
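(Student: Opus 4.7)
The plan is to proceed by induction on the length $m+1$ of the regular sequence. The base case, a singleton regular sequence $p_0$, is immediate: $P[n]$ is a domain, so $p_0q_0=0$ forces $q_0=0$, and the Koszul module $\{p_je_i-p_ie_j:i\neq j\}$ is empty, hence $Syz=0$.

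For the inductive step, suppose the proposition holds for regular sequences of length $m$, and let $(q_0,\dots,q_m)\in Syz$ for the regular sequence $p_0,\dots,p_m$. The crucial move is to peel off the last component. Since
$$
p_mq_m=-\sum_{i=0}^{m-1}p_iq_i\in I_{m-1}:=(p_0,\dots,p_{m-1}),
$$
and since $p_m$ is a non-zerodivisor in $P[n]/I_{m-1}$ by Definition~\ref{def}, we conclude $q_m\in I_{m-1}$, so we can write $q_m=\sum_{i=0}^{m-1}h_ip_i$ for some $h_i\in P[n]$. This is the one place the regularity hypothesis is used in an essential way; everything else is book-keeping.

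Next I would subtract the Koszul syzygy $\sum_{i=0}^{m-1}h_i(p_ie_m-p_me_i)$, which lies in $Syz$, from $(q_0,\dots,q_m)$. A direct component check shows that the $m$-th entry of the difference becomes $q_m-\sum_{i<m}h_ip_i=0$, while the $i$-th entry ($i<m$) becomes $q_i+h_ip_m$. Rewriting the original syzygy relation as $\sum_{i<m}p_iq_i=-p_mq_m=-p_m\sum_{i<m}h_ip_i$ yields
$$
\sum_{i=0}^{m-1}p_i\bigl(q_i+h_ip_m\bigr)=0,
$$
so the truncated tuple $(q_0+h_0p_m,\dots,q_{m-1}+h_{m-1}p_m)$ is a syzygy for the regular sequence $p_0,\dots,p_{m-1}$ (which is regular because it inherits the non-zerodivisor conditions from the longer sequence and its zero locus contains the nonempty zero locus of the full sequence).

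By the inductive hypothesis the truncated syzygy lies in the Koszul module generated by the $p_je_i-p_ie_j$ with $i,j<m$. Adding back the removed piece $\sum_{i<m}h_i(p_ie_m-p_me_i)$ expresses $(q_0,\dots,q_m)$ itself as a combination of Koszul syzygies $p_je_i-p_ie_j$ with $0\leq i\neq j\leq m$, proving triviality. The main obstacle is purely the first step, the extraction of $q_m$ from $I_{m-1}$; it is exactly the content of the non-zerodivisor clause in the definition of a regular sequence, so no further deep input (e.g.\ Proposition~\ref{homog}) is required.
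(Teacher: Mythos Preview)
Your proof is correct and follows essentially the same inductive strategy as the paper: peel off the last coefficient $q_m$ using the non-zerodivisor clause, substitute back, and invoke the inductive hypothesis on the shorter sequence. The only cosmetic difference is that the paper starts the induction at length two (which sidesteps having to argue that $p_0\neq 0$ in $P[n]$) and writes the conclusion in the $q_a=\sum_b r_{ab}p_b$ form rather than in terms of Koszul generators, but the content is identical.
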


\begin{proof} Let $p_0,\cdots,p_m$ be a regular sequence. We do induction on $m$.

When $m=1$, given $p_0f_0+p_1f_1=0$, by the definition of a regular
sequence, we know $f_1=p_0h$ for some $h$. It follows that $f_0=-p_1h$.
The statement of the theorem is verified in this case.

Suppose the statement is true for $m=k$. For a regular sequence
$p_0,\cdots p_{k+1}$,
\begin{equation}\label{regu}
p_0f_0+\cdots+p_{k+1}f_{k+1}=0
\end{equation}
implies
\begin{equation}\label{regul}
f_{k+1}=r_{k+1\, 0}\,p_0+\cdots+r_{k+1\, k}\,p_k
\end{equation}
by the definition of a regular sequence. Substituting~\eqref{regul} into~\eqref{regu}
we obtain
$$
p_0(f_0+p_{k+1}\,r_{k+1\, 0})+\cdots+p_k(f_k+p_{k+1}\,r_{k+1\, k})=0.
$$
The induction hypothesis then ensures that
$$
f_a+p_{k+1}\,r_{k+1\, a}=\sum_{b=0}^k r_{ab}\,p_b,\quad r_{ab}=-r_{ba},\quad 0\leq a\leq k.
$$
That is,
$$
f_a=\sum_{b=0}^{k+1}r_{ab}\,p_b,\quad r_{ab}=-r_{ba},\quad 0\leq a\leq k+1,
$$
where we define $r_{a\, k+1}:=-r_{k+1\, a}, 0\leq a\leq k,$ with the latter
defined in~\eqref{regul}.
\end{proof}

\section{How the ideal theory interacts with isoparametric hypersurfaces}\label{ISO}
Through M\"{u}nzner's work~\cite[II]{Mu}, we know the number $g$
of principal curvatures of an isoparametric hypersurface $M$ in the sphere
is 1,2,3,4 or 6, and there are at most two multiplicities
$\{m_1,m_2\}$ of the principal curvatures of $M$, occurring alternately when
the principal curvatures are ordered,
where $m_1=m_2$ if $g$ is odd.
Over the ambient Euclidean space in which $M$ sits there is
a homogeneous polynomial $F$, called the Cartan-M\"{u}nzner polynomial,
of degree $g$ that satisfies
\begin{equation}\nonumber
|\nabla F|^2(x)=g^2|x|^{2g-2},\quad (\Delta F)(x)=(m_2-m_1)g^2|x|^{g-2}/2
\end{equation}
whose restriction $f$ to the sphere has image in $[-1,1]$ with $\pm 1$ the only critical values~\cite[I]{Mu}.
For any $c\in(-1,1)$, the preimage $f^{-1}(c)$ is an isoparametric
hypersurface with $f^{-1}(0)=M$. This 1-parameter of
isoparametric hypersurfaces degenerates to the two submanifolds
$f^{-1}(\pm 1)$ of codimension $m_1+1$ and $m_2+1$ in the sphere.

The isoparametric hypersurfaces with $g=1,2,3$ were classified by
Cartan to be homogeneous
~\cite{Car2},~\cite{Car3}. For $g=6$, it is known that $m_1=m_2=1$
or 2 by Abresch~\cite{A}. Dorfmeister
and Neher~\cite{DN1} showed that the isoparametric hypersurface is homogeneous in the
former case and Miyaoka~\cite{Mi},~\cite{Mi1} settled the latter.

For $g=4$, there are infinite classes of inhomogeneous examples
of isoparametric hypersurfaces, two of which were first constructed by Ozeki and Tackeuchi~\cite[I]{OT} 
to be generalized later by Ferus, Karcher and M\"{u}nzner~\cite{FKM},
referred to collectively as isoparametric hypersurfaces of OT-FKM type subsequently. We remark that
the OT-FKM type includes all the homogeneous examples barring the two
with multiplicities $\{2,2\}$ and $\{4,5\}$. To construct the OT-FKM type,
let $P_{0},\cdots,P_{m}$ be a Clifford system on
${\mathbb R}^{2l}$, which are orthogonal symmetric operators on
${\mathbb R}^{2l}$ satisfying
$$
P_{i}P_{j}+P_{j}P_{i}=2\delta_{ij}I,\;\;i,j=0,\cdots,m.
$$
The ${\rm 4}$th degree homogeneous
polynomial
$$
F(x)=|x|^{4}-2\sum_{i=0}^{m}(\langle P_{i}(x),x\rangle)^2
$$
is the Cartan-M\"{u}nzner polynomial,
where the angle brackets on the right hand side denote the Euclidean
inner product. 
The two multiplicities of the OT-FKM type
are $m$ and $k\delta(m)-1$ for any $k=1,2,3,\cdots$, where $\delta(m)$ is the
dimension of an irreducible module of the Clifford algebra $C_{m-1}$
with $l=k\delta(m)$. Stolz~\cite{St}
showed that these multiplicity pairs and $\{2,2\}$ and $\{4,5\}$
are exactly the possible multiplicities of isoparametric hypersurfaces
with four principal curvatures in the sphere.

To fix notation, we make the convention, by changing $F$ to $-F$
if necessary, that its two focal manifolds are $M_{+}:=F^{-1}(1)$ and
$M_{-}:=F^{-1}(-1)$ with respective codimensions $m_1+1\leq m_2+1$ in
the ambient sphere $S^{2(m_1+m_1)+1}$. The principal curvatures
of the shape operator $S_n$ of $M_{+}$ (vs. $M_{-}$) with respect to any unit
normal $n$ 
are $0,1$ and $-1$, whose multiplicities are, respectively,
$m_1,m_2$ and $m_2$ (vs. $m_2,m_1$ and $m_1$).

The third fundamental form of $M_{+}$ is the symmetric tensor
$$
q(X,Y,Z):=(\nabla^{\perp}_X S)(Y,Z)/3
$$
where $\nabla^{\perp}$ is the normal connection. For a chosen normal frame
$n_0,\cdots,n_{m_1}$ write
$$
p_a(X,Y):=\langle S(X,Y),n_a\rangle,\quad q_a(X,Y,Z)=\langle q(X,Y,Z),n_a\rangle,\quad 0\leq a\leq m_1.
$$
The Cartan-M\"{u}nzner polynomial $F$ is related to $p_a$ and $q_a$ by
the expansion formula of Ozeki and Takeuchi~\cite[I, p. 523]{OT}

\begin{eqnarray}\label{ot}
\aligned
&F(tx+y+w)=t^4+(2|y|^2-6|w|^2)t^2+8(\sum_{i=0}^{m_1}p_{i}w_{i})t\\
&+|y|^4-6|y|^2|w|^2+|w|^4-2\sum_{i=0}^{m_1}p_{i}^2-8\sum_{i=0}^{m_1}q_{i}w_{i}
\\
&+2\sum_{i,j=0}^{m_1}\langle\nabla p_{i},\nabla p_{j}\rangle w_{i}w_{j}
\endaligned
\end{eqnarray}
where $w:=\sum_{i=0}^{m_1} w_i n_i$, $y$ is tangential to $M_{+}$ at $x$,
$p_i:=p_i(y,y)$, $q_i:=q_i(y,y,y)$ and $\nabla$ is the Euclidean gradient. Note that our definition of $q_i$
differs from that of Ozeki and Takeuchi by a sign. An entirely similar
formula holds when $m_1$ is replaced by $m_2$.

In the expansion formula, the components of the second and third fundamental forms
are intertwined in ten convoluted equations. The first three say that the
shape operator $S_n$ satisfies $(S_n)^3=S_n$ for any normal direction $n$,
which is agreeable with the fact that the eigenvalues of $S_n$ are $0,1,-1$ with
fixed multiplicities. Set
$$
<p_a,q_b>:=\langle\nabla p_a,\nabla q_b\rangle,\quad 0\leq a, b\leq m_1.
$$
The fourth and fifth combined and the sixth are
\begin{eqnarray}\nonumber
\aligned
&<p_a,q_b>+<p_b,q_a>=0,\\
&<<p_a,p_b>,q_c>+<<p_c,p_a>,q_b>+<<p_b,p_c>,q_a>=0,\quad a,b,c\;\text{distinct}.
\endaligned
\end{eqnarray}
The seventh is
\begin{equation}\label{syzygy}
p_0q_0+\cdots+p_{m_1}q_{m_1}=0.
\end{equation}
Set $G:=\sum_{a=0}^{m_1}(p_a)^2.$ The last three are
\begin{eqnarray}\label{syzygy1}
\aligned
&16\sum_{a=0}^{m_1} (q_a)^2=16G\,|y|^2-<G,G>,\\
&8<q_a,q_a>\\
&=8(<p_a,p_a>|y|^2-(p_a)^2)+<<p_a,p_a>,G>-24G\\
&-2\sum_{b=0}^{m_1}<p_a,p_b>^2,\\
&8<q_a,q_b>\\
&=8(<p_a,p_b>|y|^2-p_a\,p_b)+<<p_a,p_b>,G>\\
&-2\sum_{c=0}^{m_1}<p_a,p_c><p_b,p_c>,\quad a,b\;\text{distinct}.
\endaligned
\end{eqnarray}

It looks at the first glance that it is a rather daunting task to tackle the classification
of isoparametric hypersurfaces with four principal curvatures in the sphere.
However,~\eqref{syzygy}, which appears to be the simplest of all the above
equations, brings good tidings.

Let us bring Proposition~\ref{syzy} into perspective.
Suppose now the components $p_0,\cdots,p_{m_1}$ of the second fundamental
form constitute a regular sequence. Then Proposition~\ref{syzy} warrants that the components
$q_0,\cdots,q_m$ of the third fundamental form satisfy
\begin{equation}\label{eq4}
q_a=\sum_{b=0}^{m_1} r_{ab}\,p_b,
\end{equation}
where $r_{ab}=-r_{ba}$ are homogeneous of degree 1.

Now let us introduce the Euclidean coordinates of the eigenspaces $V_{+},V_{-},V_0$, with eigenvalues
1, -1, 0, respectively, of the shape operator $S_{n_0}$ to be
\begin{eqnarray}\nonumber
\aligned
&z_p,\quad m_1+1\leq p\leq 2m_1,\\
&u_\alpha,\quad 2m_1+1\leq\alpha\leq 2m_1+m_2,\\
&v_\mu,\quad 2m_1+m_2+1\leq\mu\leq 2m_1+2m_2,
\endaligned
\end{eqnarray}
with respect to which we write

\begin{equation}\label{eq5}
r_{ab}:=\sum_{\alpha} T_{ab}^{\alpha}u_\alpha+\sum_{\mu}T_{ab}^{\mu}v_\mu
+\sum_{p}T_{ab}^pz_p.
\end{equation}
We have

\begin{eqnarray}\label{eqnarray}\label{eq6}
\aligned
p_0&=\sum_{\alpha}(u_\alpha)^2-\sum_{\mu}(v_\mu)^2,\\
p_a&=2\sum_{\alpha\mu}S^a_{\alpha\mu}u_\alpha v_\mu
+2\sum_{\alpha p}S^a_{\alpha p}u_\alpha z_p
+2\sum_{\mu p}S^a_{\mu p}v_{\mu}z_p,
\endaligned
\end{eqnarray}
for $1\leq a\leq m_1$, where we set
$$
S^a_{\alpha\mu}:=\langle S(X_\alpha,Y_\mu),n_a\rangle,
$$
etc.,
with $X_\alpha,Y_\mu,$ and $Z_p$ the orthonormal bases for the coordinates $u_\alpha,v_\mu,$
and $w_p$, respectively. We claim that

\begin{equation}\label{eq7}
T^\alpha_{a0}=T^\mu_{a0}=0,
\end{equation}
for $1\leq a\leq m_1$. To this end, we calculate $q_a$ in two ways. On the one hand,
substituting~\eqref{eq5} and~\eqref{eq6} into~\eqref{eq4}, we see that $q_a$
has the term 
$$
(\sum_\alpha T^\alpha_{a0}u_\alpha)(\sum_\beta (u_\beta)^2)+\cdots,
$$
so that the coefficient of $(u_\alpha)^3$ in $q_a$, denoted by $q_a^{\alpha\alpha\alpha}$, is
$$
q_a^{\alpha\alpha\alpha}=T^\alpha_{a0}.
$$
On the other hand, by a direct inspection, the right hand side of the first
identity of~\eqref{syzygy1} has no $(u_\alpha)^6$-term,
so that $q_a^{\alpha\alpha\alpha}=0$.

Next, we calculate $q_0$ in two ways. On the one hand,
we expand $q_0$
by~\eqref{eq4},~\eqref{eq5},~\eqref{eq6}, and~\eqref{eq7}, keeping in mind
that $q_0$ is homogeneous of degree 1 in $u_\alpha,v_\mu$ and $z_p$,
by~\cite[I, p. 537]{OT}, to obtain that the coefficient of the
$u_\alpha v_\mu z_p$-term
of $q_0$, denoted by $q_0^{\alpha\mu p}$, is
\begin{equation}\label{eq8}
q_0^{\alpha\mu p}=2\sum_{b\geq 1}T^p_{0b}S^b_{\alpha\mu}.
\end{equation}                     
On the other hand, traversing along the great circle spanned by $x$ and $n_0$ by length
$\pi/2$,
we end up again on $M_{+}$ at $n_0$ with $x$ as a normal vector.
Accordingly, set $x^{\#}:=n_0\in M_{+}$ and ${\bf n}^{\#}_0:=x$ normal to $M_{+}$ at
$x^{\#}$.

At $x^{\#}$, set 

\begin{eqnarray}\nonumber
\aligned
&t^{\#}=w_0,\quad u_1^{\#}=u_1,\cdots,u_{m_2}^{\#}=u_{m_2},\quad v_1^{\#}=v_1,
\cdots, v_{m_2}^{\#}=v_{m_2},\\
&z_1^{\#}=w_1,\cdots,z_{m_1}^{\#}=w_{m_1},\quad
w_0^{\#}=t,w_1^{\#}=z_1,\cdots,w_{m_1}^{\#}=z_{m_1}.
\endaligned
\end{eqnarray}
Then with $|y|^2=|u|^2+|v|^2+|z|^2$, 
it is easily checked that $F$ in~\eqref{ot} will be converted to
$$
(t^{\#})^4+(2|y^{\#}|^2-6|w^{\#}|^2)(t^{\#})^2+|y^{\#}|^4
-6|y^{\#}|^2|w^{\#}|^2+|w^{\#}|^4+\cdots.
$$
In other words, the eigenspaces $V_{+}^{\#},V_{-}^{\#},V_0^{\#}$ of
$S_{n^{\#}_0}$ with eigenvalues $1,-1,0$ are,
respectively,
$V_{+},V_{-},n_0^{\perp}:={\rm span}(n_1,\cdots,n_{m_1})$.
Moreover,
${\mathbb R}x\oplus V_0$ is the normal space to $M_{+}$ at $x^{\#}$. (See~\cite[p. 15]{CCJ}
for a geometric proof.)

Note that the third term of~\eqref{ot} at $x^{\#}$, which is
$$
8(\sum_{a=0}^{m_1}p_a^{\#}w_a^{\#})t^{\#},
$$
is what determines the second fundamental form $S^{\#}$ at $x^{\#}$; in fact,
only $-8q_0w_0$ of~\eqref{ot} at $x$, when substituted by the
$\#$-quantities, contributes to the
$u_\alpha v_\mu$-components of $S^{\#}$. So, expanding $-8q_0w_0$ in
$z_1,\cdots,z_{m_1}$, we obtain
%%%%%%%%%%%%%%%%%
\begin{equation}\label{eq9}
8q_0w_0=8(\sum_{p}H^pz_p)w_0=8(\sum_p H^pw_p^{\#})t^{\#},
\end{equation}
where
\begin{equation}\label{eq10}
H^p:=2\sum_{\alpha\mu} S^p_{\alpha\mu}u_\alpha v_\mu,
\end{equation}
and $S^p_{ij}$ denotes the tangential $(ij)$-component of the second fundamental form
of $M_{+}$ in the normal $p$-direction at $x^{\#}.$
Here, we invoke again the fact that $q_0$ is homogeneous of degree 1 in all
$x_{\alpha},y_{\mu},z_{p}$.

Comparing~\eqref{eq8},~\eqref{eq9} and~\eqref{eq10}
we derive
$$
S^p_{\alpha\mu}=\sum_b f^p_b S^b_{\alpha\mu},\quad
f^p_b=T^p_{0b}.
$$
Therefore, we may assume, with the index range $m_1+1\leq p\leq 2m_1$, that 
\begin{equation}\label{anti-sym1}
S^{a+m_1}_{\alpha\mu}=S^a_{\alpha\mu},
\end{equation}
by an orthonormal frame change, so long as we can show that the matrix $\begin{pmatrix}f^p_b\end{pmatrix}$ is orthogonal.
Remarkably, this is indeed true! The key is the second identity of~\eqref{syzygy1}, where
we can employ the commutative algebra scheme
Proposition~\ref{homog} to rewrite it as a polynomial homogeneous
in all $p_ap_b$ whose coefficients are homogeneous polynomials of degree 2,
so that these coefficients are linear combinations of all $p_a$.
Specifically, the coefficient of $(p_0)^2$ is
$$
16\sum_{a=1}^{m_1}(r_{0a})^2-16(\sum_\alpha(u_\alpha)^2+\sum_\mu(v_\mu)^2
+\sum_p(z_p)^2)
+4<p_0,p_0>,
$$
which is a linear combination of $p_0,p_1,\cdots,p_{m_1}$. Knowing that $r_{0a}$ are
functions of $z_p$ alone by~\eqref{eq5} and~\eqref{eq7}, we invoke~\eqref{eq6} and compare variable
types to conclude
that
\begin{equation}\label{orthogonal}
\sum_{a=1}^{m_1}(r_{0a})^2=\sum_{p=m_1+1}^{2m_1}(z_p)^2.
\end{equation}
But then~\eqref{eq5} for $r_{0a}$ in terms of~\eqref{orthogonal} says
exactly that the matrix
$\begin{pmatrix}f^p_b\end{pmatrix}$ is orthogonal.

Now that
$$
f^{a+m_1}_b=\delta^a_b
$$
for~\eqref{anti-sym1} to hold, we deduce by~\eqref{eq5} and~\eqref{eq7}

$$
r_{0b}=\sum_{a}\delta^{a}_{b}z_{a+m_1}=z_{b+m_1},
$$
and, invoking the Einstein summation convention,
\begin{equation}\nonumber
\aligned
&q_0=r_{0b}\,p_b\\
&=2(\delta^{a}_{b}z_{a+m_1})
(S^b_{\alpha\mu}u_\alpha v_\mu+S^b_{\alpha\; c+m_1}u_\alpha z_{c+m_1}
+S^b_{\mu\; c+m_1}v_{\mu}z_{c+m_1}).
\endaligned
\end{equation}
Hence, we obtain
$$
\sum_{a b c\alpha}(\delta^{a}_{b}z_{a+m_1})(S^b_{\alpha\; c+m_1}u_\alpha z_{c+m_1})=0
$$
or equivalently, 
$$
\sum_{ac}S^a_{\alpha\, c+m_1}z_{c+m_1}z_{a+m_1}=0.
$$
In other words, we have
\begin{equation}\label{anti-sym2}
S^a_{\alpha\, c+m_1}=-S^c_{\alpha\, a+m_1}.
\end{equation}
Likewise, we have
\begin{equation}\label{anti-sym3}
S^a_{\mu\, c+m_1}=-S^c_{\mu\, a+m_1}.
\end{equation}

It is evident now that~\eqref{anti-sym1},~\eqref{anti-sym2}, and~\eqref{anti-sym3}
enjoy a certain "Clifford" property. In fact, as shown in~\cite{Ch}, the geometric meaning
of these three equations is that they give rise to intrinsic isometries on
$M_{+}$ that exactly form the $Spin$-action on $M_{+}$ in the case when the isoparametric
hypersurface is of OT-FKM type. Moreover, we showed in~\cite{Ch1}, based on~\cite{CCJ},~\cite{Ch},
that if we assume
the mild condition that $m_1<m_2$, which essentially says that $M_{+}$ is sufficiently curved, then these intrinsic isometries extend to
extrinsic isometries of the ambient sphere to yield the OT-FKM type:
%%\begin{lemma}\label{span}
%%If $m_1<m_2$, then the vectors
%%$$
%%(F^\mu_{\alpha 1},F^\mu_{\alpha 2},\cdots,F^\mu_{\alpha m_1}),
%%$$
%%for $1\leq\alpha,\mu\leq m_2,$ span ${\mathbb R}^{m_1}$.
%%\end{lemma}

%%\begin{proof} This is~\cite[Proposition 7, p. 18]{CCJ}.
%%\end{proof}

\begin{proposition}\label{iso}
Let $m_1<m_2$. If~\eqref{anti-sym1},~\eqref{anti-sym2}, and~\eqref{anti-sym3} hold,
then the hypersurface is of OT-FKM type. In particular, if $m_1<m_2$ and
the components of the second fundamental form $p_0,p_1,\cdots,p_{m_1}$
of $M_{+}$ form a regular sequence,
then the isoparametric hypersurface is of OT-FKM type.
\end{proposition}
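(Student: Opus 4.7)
The plan has two parts. The second statement of the proposition reduces immediately to the first: under the regular-sequence hypothesis on $p_0,\ldots,p_{m_1}$, Proposition~\ref{syzy} applied to the syzygy \eqref{syzygy} produces the representation \eqref{eq4} for the $q_a$, and the entire calculation running from \eqref{eq4} through \eqref{anti-sym3}---in which Proposition~\ref{homog} is applied to the second identity of \eqref{syzygy1} to extract \eqref{orthogonal} and hence the orthogonality of the matrix $(f^p_b)$---already derives \eqref{anti-sym1}, \eqref{anti-sym2}, and \eqref{anti-sym3}. So it suffices to prove the first statement, which I take as the working assumption.

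Granted the three antisymmetry relations, the first step is to interpret them in Clifford-theoretic terms. For each $1 \leq a \leq m_1$, I would package the components $S^a_{\alpha\mu}$, $S^a_{\alpha\,c+m_1}$, $S^a_{\mu\,c+m_1}$ of the second fundamental form into a single skew endomorphism $E_a$ of $T_xM_+ = V_+ \oplus V_- \oplus V_0$. Together with the natural involution supplied by the eigenspace decomposition of $S_{n_0}$ (which records $p_0 = |u|^2 - |v|^2$), the identities \eqref{anti-sym1}--\eqref{anti-sym3} are precisely what is needed for the $E_a$ to anticommute pairwise and square to a negative multiple of the identity, so that $T_xM_+$ acquires the structure of a module for an appropriate Clifford algebra. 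This is the geometric content worked out in~\cite{Ch}; by parallel transport of the normal frame and exponentiation of the $E_a$ one obtains a $Spin$-action on $M_+$ by intrinsic isometries, matching precisely the intrinsic $Spin$-symmetry enjoyed by an OT-FKM focal manifold.

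The main obstacle, and the place where the hypothesis $m_1 < m_2$ enters decisively, is the passage from these intrinsic isometries of $M_+$ to extrinsic isometries of the ambient sphere $S^{2(m_1+m_2)+1}$. The heuristic is that when $m_1 < m_2$ the focal manifold $M_+$ is ``sufficiently curved'': the normal space is small enough and the second fundamental form has large enough rank that any isometry of $M_+$ preserving the second and third fundamental forms must be the restriction of a rigid motion of the ambient sphere. I would quote this rigidity step from~\cite{Ch1}, which rests in turn on~\cite{CCJ} and~\cite{Ch}, rather than reproduce it. Once the intrinsic $Spin$-action is known to act extrinsically as a linear $Spin(m_1+1)$-action on $\mathbb{R}^{2(m_1+m_2+1)}$, its infinitesimal generators deliver an orthogonal symmetric Clifford system $P_0,\ldots,P_{m_1}$, and Ozeki--Takeuchi's expansion \eqref{ot} then identifies the Cartan--M\"unzner polynomial $F$ with the OT-FKM polynomial attached to that Clifford system, completing the identification.
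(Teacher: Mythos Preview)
Your proposal is correct and follows essentially the same approach as the paper: the paper does not prove Proposition~\ref{iso} in detail but, just as you do, refers the geometric interpretation of \eqref{anti-sym1}--\eqref{anti-sym3} as intrinsic $Spin$-isometries of $M_+$ to~\cite{Ch}, and the extension to extrinsic isometries of the ambient sphere under the hypothesis $m_1<m_2$ to~\cite{Ch1} (building on~\cite{CCJ} and~\cite{Ch}). Your reduction of the second statement to the first via the syzygy argument already carried out in the text is exactly right.
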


By this proposition, the classification of isoparametric hypersurfaces
with four principal curvatures now boils down to exploring
Lemma~\ref{generatereg} to warrant that the components
$p_0,\cdots,p_{m_1}$ of the second fundamental form of $M_{+}$ constitute a regular sequence.
To this end, let us look at the $p_0,\cdots,p_k,k\leq m_{1}-1$. Following Lemma~\ref{generatereg}
we must estimate the codimension of $J_k$ in $V_k$ by understanding the rank of
the Jacobian matrix of $p_0,\cdots,p_k$.

Let us parametrize ${\mathbb C}^{2m_2+m_1}$ by
points $(u,v,w)$
with coordinates $u_\alpha,v_\mu$, and $w_p$, where $1\leq\alpha,\mu\leq m_2$,
and $1\leq p\leq m_1$. For $0\leq k\leq m_1$, let
$$
V_k:=\{(u,v,w)\in{\mathbb C}^{2m_2+m_1}:p_0(u,v,w)=\cdots=p_k(u,v,w)=0\}.
$$
We first estimate the dimension of the subvariety $X_k$ of ${\mathbb C}^{2m_2+m_1}$, where
$$
X_k:=\{(u,v,w)\in{\mathbb C}^{2m_2+m_1}
:\text{rank of the Jacobian of}\; p_0,\cdots,p_k<k+1\}.
$$
This amounts to saying that $dp_0,\cdots,dp_k$ are linearly dependent, or, that there are
constants $c_0,\cdots,c_k$ such that

\begin{equation}\label{linear}
c_0dp_0+\cdots+c_kdp_k=0.
\end{equation}
Since $p_a=\langle S_a(x),x\rangle$, we see $dp_a=2\langle S_a(x),dx\rangle$
for $x=(u,v,w)^{tr}$;
therefore, by~\eqref{linear}
$$
X_k=\{(u,v,w):(c_0S_0+\cdots+c_kS_k)\cdot (u,v,w)^{tr}=0\}.
$$
for $[c_0:\cdots:c_k]\in{\mathbb C}P^k$,
Here, $\langle S_{a}(X),Y\rangle=\langle S(X,Y),n_a\rangle$ is the shape operator of the focal
manifold
$M_{+}$ in the normal direction $n_a$.
By Lemma~\ref{generatereg}, we wish to establish

\begin{equation}\nonumber
\dim(X_k\cap V_k)\leq\dim(V_k)-2
\end{equation}
for $k\leq m_1-1$ to verify that $p_0,p_1,\cdots,p_{m_1}$ form a
regular sequence since

\begin{equation}\label{prim}
J_k=X_k\cap V_k.
\end{equation}

Note that for a fixed
$\lambda=[c_0:\cdots:c_k]\in {\mathbb C}P^k$,
if we set
$$
{\mathscr S}_\lambda:=\{(u,v,w): (c_0S_0+\cdots+c_kS_k)\cdot (u,v,w)^{tr}=0\},
$$
then we have
\begin{equation}\label{unio}
X_k=\cup_{\lambda\in{\mathbb CP}^k} {\mathscr S}_\lambda.
\end{equation}
Thus, it is fundamental to estimate the dimension of ${\mathscr S}_\lambda$.

We break it into two cases.
If $c_0,\cdots,c_{k}$ are either all real or all purely imaginary, then
$$
\dim({\mathscr S}_\lambda)=m_1,
$$
since $c_0S_{n_0}+\cdots+c_kS_{n_k}=cS_n$ for some unit normal vector $n$ and
some nonzero real or purely imaginary constant $c$, and we know that the null space
of $S_n$ is of dimension $m_1$ for all normal $n$.

On the other hand, if $c_0,\cdots,c_k$ are not all
real and not all purely imaginary, then after a normal basis change,
we can assume that

\begin{equation}\label{slamb}
{\mathscr S}_\lambda=\{(u,v,w):(S_{1^*}-\mu_\lambda S_{0^*})
\cdot(u,v,w)^{tr}=0\}
\end{equation}
for some complex number $\mu_\lambda$ relative to a new orthonormal normal basis $n^*_0,n^*_1,\cdots,n^*_{k}$
in the linear span of $n_0,n_1,\cdots,n_k$; explicitly, $n_0^*$ and $n_1^*$
are obtained by decomposing $n:=c_0n_0+\cdots+c_kn_k$ into its real and imaginary
parts $n=\alpha+\sqrt{-1}\beta$ and define $n_0^*$ and $n_1^*$ by performing the Gram-Schmidt
process.

In matrix terms, the equation in~\eqref{slamb} assumes the form

\begin{equation}\label{Matrix}
\begin{pmatrix}0&A&B\\A^{tr}&0&C\\B^{tr}&C^{tr}&0\end{pmatrix}\begin{pmatrix}x\\y\\z\end{pmatrix}
=\mu_\lambda\begin{pmatrix}I&0&0\\0&-I&0\\0&0&0\end{pmatrix}\begin{pmatrix}x\\y\\z\end{pmatrix},
\end{equation}
where $x,y$, and $z$ are (complex) eigenvectors of $S_{0^*}$ with
eigenvalues $1,-1$, and $0$, respectively.

Suffices it to say, leaving the details to~\cite{Ch1}, that~\cite[Lemma 49, p. 64]{CCJ}
ensures that we can normalize the matrix on the left hand side
of~\eqref{Matrix} to
decompose $x,y,z$ into $x=(x_1,x_2),y=(y_1,y_2),z=(z_1,z_2)$
with $x_2,y_2,z_2\in{\mathbb C}^{r_\lambda}$, where $r_\lambda$ is the rank of $B$, or
intrinsically, $m_1-r_\lambda$ is the dimension of the intersection of the kernels of
$S_{0^*}$ and $S_{1^*}$. With respect to this decomposition 
either $x_1=y_1=0$ or both are nonzero
with $\mu_\lambda=\pm\sqrt{-1}$. In both cases we have
$x_2=-y_2$ and can be solved in $z_2$ 
so that $z$ can be chosen to be a free variable. Hence, either $x_1=y_1=0$, in which case

$$
\dim({\mathscr S}_\lambda)=m_1,
$$
or both $x_1$ and $y_1$ are nonzero,
in which case $y_1=\pm\sqrt{-1}x_1$ and so

\begin{equation}\label{eSt}
\dim({\mathscr S}_\lambda)=m_1+m_2-r_\lambda,
\end{equation}
where $x_1$ contributes dimension $m_2-r_\lambda$ while $z$ does $m_1$. Now since
by~\eqref{prim} and~\eqref{unio}

\begin{equation}\label{jk}
J_k=X_k\cap V_k=\cup_{\lambda\in{\mathbb C}P^k} ({\mathscr S}_\lambda\cap V_k),
\end{equation}
where $V_k$ is defined by $p_0=\cdots=p_k=0$ and also by $p_{0^*}=\cdots=p_{k^*}$,
let us cut ${\mathscr S}_\lambda$ by
$$
0=p_{0^*}=\sum_{\alpha}(x_\alpha)^2-\sum_{\mu}(y_\mu)^2
$$
to achieve an initial estimate of $\dim(J_k)$.

\vspace{2mm}

\noindent Case 1: $x_1$ and $y_1$ are both nonzero. This is the case of
nongeneric $\lambda\in{\mathbb C}P^k$. We substitute $y_1=\pm \sqrt{-1}x_1$ and $x_2$
and $y_2$ in terms of $z_2$ into $p_{0^*}=0$ to
deduce that
$$
0=p_{0^*}=(x_1)^2+\cdots+(x_{m_2-r_\lambda})^2+z\; \text{terms};
$$
hence, $p_{0^*}=0$ cuts ${\mathscr S}_\lambda$ to reduce the dimension by 1.
That is, now by~\eqref{eSt},

\begin{equation}\label{Sub}
\dim(V_{k}\cap{\mathscr S}_\lambda)\leq (m_1+m_2-r_\lambda)-1\leq m_1+m_2-1,
\end{equation}
noting that $V_k$ is also cut out by $p_{0^*},p_{1^*},\cdots,p_{k^*}$. Meanwhile,
only a subvariety of $\lambda$ of
dimension $k-1$ in ${\mathbb C}P^k$ assumes $\mu_\lambda=\pm\sqrt{-1}$;
in fact, this subvariety
is a smooth hyperquadric ${\mathcal Q}_{k-1}$ in ${\mathbb C}P^k$. This is because
if we write $(c_0,\cdots,c_k)=\alpha+\sqrt{-1}\beta$ where
$\alpha$ and $\beta$ are
real vectors, then $\mu_\lambda=\pm\sqrt{-1}$
is equivalent to the conditions that
$\langle\alpha,\beta\rangle=0$ and $|\alpha|^2=|\beta|^2$. That is,
the nongeneric
$\lambda\in{\mathbb C}P^k$ constitute the smooth hyperquadric.
Therefore, by~\eqref{jk}, an irreducible component ${\mathcal W}$
of $J_k$ over nongeneric $\lambda$
will satisfy

\begin{equation}\nonumber
\dim({\mathcal W})\leq\dim(V_{k}\cap{\mathscr S}_\lambda)+k-1\leq
m_1+m_2+k-2.
\end{equation}
(Total dimension $\leq$ base dimension $+$ fiber dimension.)

\vspace{2mm}
 
\noindent Case 2: $x_1=y_1=0$. This is the case of generic $\lambda$, where
$\dim({\mathscr S}_\lambda)=m_1$, so that an irreducible component
${\mathcal V}$
of $J_k$ over generic $\lambda$ will satisfy

$$
\dim({\mathcal V})\leq m_1+k\leq m_1+m_2+k-2,
$$
as we may assume that $m_2\geq 2$, noting that
the case $m_1=m_2=1$ is straightforward~\cite[p. 61]{CCJ}.

Putting these two cases together, we conclude that

\begin{equation}\label{Crucial}
\dim(J_k)=\dim(X_k\cap V_k)\leq m_1+m_2+k-2.
\end{equation}
On the other hand, since $V_k$ is cut out by $k+1$ equations
$p_0=\cdots=p_k=0$, we have

\begin{equation}\label{Lower-bound}
\dim(V_k)\geq m_1+2m_2-k-1.
\end{equation}
Therefore,
\begin{equation}\label{cod2}
\dim(J_k)\leq \dim(V_k)-2
\end{equation}
when $k\leq m_1-1$, taking $m_2\geq 2m_1-1$ into account.

In summary, we have established~\eqref{cod2} for $k\leq m_1-1$,
so that the ideal $(p_0,p_1,\cdots,p_k)$ is prime when $k\leq m_1-1$.
Lemma~\ref{generatereg} 
then implies that $p_0,p_1,\cdots,p_{m_1}$ form a regular sequence.
It follows by Proposition~\ref{iso} 
that the isoparametric hypersurface is of OT-FKM type. Thus, we derived
in~\cite{Ch1} the classification proven in~\cite{CCJ} in a simpler fashion:

\begin{theorem}\label{normalcool} Assume $m_2\geq 2m_1-1$. Then the isoparametric hypersurface
with four principal curvatures is of OT-FKM type.
\end{theorem}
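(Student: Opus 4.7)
The plan is to reduce the theorem to showing that the components $p_0,p_1,\ldots,p_{m_1}$ of the second fundamental form of $M_+$ constitute a regular sequence in $P[2m_2+m_1]$. Once this is secured, Proposition~\ref{iso} immediately identifies the isoparametric hypersurface as being of OT--FKM type (the hypothesis $m_2 \geq 2m_1-1$ forces $m_1 < m_2$ except when $m_1=m_2=1$, a case already treated in~\cite{CCJ}). To produce the regular sequence I will invoke Lemma~\ref{generatereg}, which requires, for each $0 \leq k \leq m_1-1$, the codimension estimate $\dim(J_k) \leq \dim(V_k)-2$, where $J_k = X_k \cap V_k$ and $X_k$ is the locus where the Jacobian of $p_0,\ldots,p_k$ fails to have full rank.

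The main computation is to bound $\dim(J_k)$ by fibering $X_k = \bigcup_{\lambda\in\mathbb{C}P^k}\mathscr{S}_\lambda$ over the projective space of coefficient vectors $\lambda=[c_0:\cdots:c_k]$, with $\mathscr{S}_\lambda = \ker(c_0 S_{n_0}+\cdots+c_k S_{n_k})$. I then split into cases. When $\lambda$ has all real or all purely imaginary coordinates, the pencil reduces to a scalar multiple of a single shape operator and $\dim(\mathscr{S}_\lambda)=m_1$. Otherwise, after a unitary change of normal frame, $\mathscr{S}_\lambda$ is described by the eigenvalue problem~\eqref{Matrix}; the normalization lemma from~\cite{CCJ} splits the ambient coordinates as $x=(x_1,x_2)$, $y=(y_1,y_2)$, $z=(z_1,z_2)$ with $x_2,y_2,z_2\in\mathbb{C}^{r_\lambda}$, forces $\mu_\lambda=\pm\sqrt{-1}$, and yields $\dim(\mathscr{S}_\lambda)=m_1+m_2-r_\lambda$. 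The parameters in this nongeneric case are cut out by $\langle\alpha,\beta\rangle=0$ and $|\alpha|^2=|\beta|^2$, confining them to a smooth hyperquadric $\mathcal{Q}_{k-1}\subset\mathbb{C}P^k$ of dimension $k-1$.

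Intersecting with $V_k$, I will use only the single equation $p_{0^*}=0$ written in the normalized frame. In the nongeneric case, substituting $y_1 = \pm\sqrt{-1}\,x_1$ and expressing $x_2,y_2$ in terms of $z_2$ reveals that $p_{0^*}$ restricts to a nondegenerate quadratic form in $x_1$ plus $z$-terms, so slicing reduces the dimension by exactly one and
\begin{equation*}
\dim(V_k\cap\mathscr{S}_\lambda) \leq (m_1+m_2-r_\lambda)-1 \leq m_1+m_2-1.
\end{equation*}
Combining with the $(k-1)$-dimensional base gives $\dim(J_k)\leq m_1+m_2+k-2$; the generic case yields the stronger bound $m_1+k$. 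Since $V_k$ is cut by $k+1$ equations in $\mathbb{C}^{2m_2+m_1}$, we have $\dim(V_k)\geq m_1+2m_2-k-1$, so
\begin{equation*}
\dim(V_k)-\dim(J_k) \geq m_2-2k+1,
\end{equation*}
which is at least $2$ precisely when $m_2 \geq 2k+1$; for $k\leq m_1-1$ this is guaranteed by the hypothesis $m_2\geq 2m_1-1$. Lemma~\ref{generatereg} then delivers the regular sequence, and Proposition~\ref{iso} completes the classification.

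The hard part, as I see it, is the nongeneric case. One must carry out the simultaneous normalization of the pencil against the projector $\operatorname{diag}(I,-I,0)$, tracking how the rank $r_\lambda$ of the off-diagonal block $B$ governs both the dimension of $\mathscr{S}_\lambda$ and the parameter locus $\mathcal{Q}_{k-1}$. Without this delicate structural input, one cannot verify that $p_{0^*}$ cuts a genuine quadric in $x_1$ (the crucial single-dimension drop) nor that the mixed-coefficient configurations are confined to a hyperquadric rather than filling a hypersurface of larger dimension in $\mathbb{C}P^k$; both facts are essential for the codimension-2 bound to close exactly at the threshold $m_2=2m_1-1$.
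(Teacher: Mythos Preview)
Your proposal is correct and follows essentially the same route as the paper: fiber $X_k$ over $\mathbb{C}P^k$, use the normalization from~\cite[Lemma~49]{CCJ} to bound $\dim(\mathscr{S}_\lambda)$ in the generic and nongeneric cases, cut by $p_{0^*}=0$, and combine the resulting bound $\dim(J_k)\le m_1+m_2+k-2$ with $\dim(V_k)\ge m_1+2m_2-k-1$ so that Lemma~\ref{generatereg} and Proposition~\ref{iso} apply. One small imprecision: the normalization does not by itself ``force $\mu_\lambda=\pm\sqrt{-1}$''; rather, within the mixed-coefficient case there is a further dichotomy $x_1=y_1=0$ (giving $\dim(\mathscr{S}_\lambda)=m_1$) versus $x_1,y_1\neq 0$ with $\mu_\lambda=\pm\sqrt{-1}$, and it is only the latter that lands on the hyperquadric $\mathcal{Q}_{k-1}$---but since you do invoke both bounds, the argument goes through unchanged.
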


By the multiplicity result of Stolz~\cite{St}, which says 
that $(m_1,m_2)$ is either $(2,2),(4,5)$ or that of an isoparametric hypersurface
of OT-FKM type, Theorem~\ref{normalcool} finishes off all the isoparametric hypersurfaces with four principal curvatures,
except when $(m_1,m_2)=(3,4),(4,5),(6,9)$ or $(7,8)$. The class of 
isoparametric hypersurfaces in the theorem are tied with {\em complete intersections}, i.e., those polynomial ideals generated
by regular sequences.
In sharp contrast, the four remaining cases have the peculiar property,
due to the fact that they are tied with quaternion and octonion algebras, that
$p_0,\cdots,p_{m_1}$ fail to be regular sequences; for if they formed
a regular sequence, Proposition~\ref{iso} would imply that the isoparametric hypersurface
was to be of OT-FKM type where the Clifford action acted on $M_{+}$. However,
such an isoparametric hypersurface can never be of OT-FKM type when $(m_1,m_2)=(4,5)$,
whereas for $(m_1,m_2)=(3,4), (6,9)$ or $(7,8)$, there
are examples in the same ambient sphere where the Clifford action acts on $M_{-}$.
This is a contradiction. Thus, $p_0,\cdots,p_{m_1}$ cannot be regular.
Irregular sequences, even over complex numbers, can be wildly untamed.

It turns out that {\em Condition A} of Ozeki and Takeuchi plays a
decisive role in handling the exceptional cases when the multiplicity pair
is $(m_1,m_2)=(3,4),(4,5)$ or $(6,9)$.

\begin{definition} A point $p\in M_{+}$ is of Condition A if
$S_n$ at $p$ share the same kernel in all normal directions $n$.
\end{definition}

Originally Ozeki and Takeuchi~\cite[I]{OT} introduced Conditions A and B in their construction
of two families of inhomogeneous isoparametric hypersurfaces with four principal curvatures
with multiplicity pair $(m_1,m_2)=(3,4k)$ or $(7,8k)$ for $k\geq 1$, where the Clifford
action acts on $M_{+}$. Later, Dorfmeister
and Neher~\cite{DN} showed that Condition A alone implies that the isoparametric
hypersurface is of OT-FKM type (see also~\cite{Ch2}); in particular, in the case when
$(m_1,m_2)=(3,4)$ or $(7,8)$, either the Clifford action acts on $M_{+}$,
which are the ones constructed by Ozeki and Takeuchi, or it acts on $M_{-}$,
which are the ones constructed by Ferus, Karcher and M\"{u}nzner.
In contrast, those isoparametric hypersurfaces
with $(m_1,m_2)=(4,5)$ or $(6,9)$ do not admit any points of Condition A.

We must now come up with a finer estimate on the right hand side of~\eqref{Sub}
in which the quantity $r_\lambda$ is entirely discarded. This is where Condition A
comes in.

Note that if we stratify the above hyperquadric ${\mathcal Q}_{k-1}$ of nongeneric
$\lambda\in{\mathbb C}P^k$
into subvarieties ${\mathcal L}_j$ over which
$r_\lambda=j$, then by~\eqref{Sub} an irreducible component ${\mathcal W}$
of $V_k\cap (\cup_{\lambda\in {\mathcal L}_j} {\mathscr S}_\lambda)$
will satisfy                            
\begin{equation}\nonumber
\dim({\mathcal W})\leq\dim(V_{k}\cap{\mathscr S}_\lambda)+k-1\leq
m_1+m_2+k-2-j.
\end{equation}
%%\noindent Case 2. $x_1=y_1=0$. This is the case of generic $\lambda$, where
%%$\dim({\mathscr S}_\lambda)=m_1$, so that an irreducible component ${\mathcal V}$
%%of $W_k\cap (\cup_{\lambda\in{\mathcal G}}{\mathscr S}_\lambda)$,
%%where ${\mathcal G}$ is the Zariski open set of ${\mathbb C}P^k$ of generic
%%$\lambda$, will satisfy
%%\begin{equation}\label{v}
%%\dim({\mathcal V})\leq m_1+k.
%%\end{equation}
%%On the other hand, since $W_k$ is cut out by $k+1$ equations, we have
%%\begin{equation}\label{lower-bound}
%%\dim(W_k)\geq m_1+2m_2-k-1.
%%\end{equation}

%%\begin{lemma}\label{lm1} When $(m_1,m_2)=(4,5)$ $($respectively, $(m_1,m_2)=(6,9)$$)$
%%and $j\geq 2$, there holds in
%%equation~\eqref{W} the estimate
%%\begin{equation}\label{wk}
%%\dim({\mathcal W}_j)\leq \dim(W_k)-2
%%\end{equation}
%%for $k\leq m_1-1=3$ $($respectively, $k\leq 5$$)$.
%%\end{lemma}

%%\begin{proof} For~\eqref{wk} to be true, we must have both 
%%\begin{eqnarray}\nonumber
%%\aligned
%%m_1+m_2+k-2-j&\leq m_1+2m_2-k-3,\\
%%m_1+k&\leq m_1+2m_2-k-3
%%\endaligned
%%\end{eqnarray}
%%by~\eqref{W},~\eqref{v} and~\eqref{lower-bound}. The second inequality is $2m_2\geq 2k+3$, which is always true, while the
We run through the same arguments as that following~\eqref{Sub} to deduce that
the codimension 2 estimate~\eqref{cod2} holds true over ${\mathcal L}_j$ when

\begin{equation}\label{refinedest}
m_2\geq 2k+1-j.
\end{equation}

Let us look at the case when $(m_1,m_2)=(3,4)$. Here, $0\leq k\leq m_1-1=2$. First observe that~\eqref{refinedest} is automatically
satisfied when $j\geq 1$. Assume $j=0$ now; let $\lambda_0$ be an element in ${\mathcal L}_0$ and so $r_{\lambda_0}=j=0$.  

Suppose that $M_{+}$ is free of points of Condition A everywhere. 
Since $r_{\lambda_0}=0$, the matrices $B=C=0$ and $A=I$ in~\eqref{Matrix} for $S_{1^*}$. For notational clarity,
let us denote the associated $B$ and $C$ blocks of the shape operator matrices $S_{n_a^*}$ by $B_{a^*}$ and $C_{a^*}$ for the normal basis elements
$n_1^*,
\cdots,n_{m_1}^*$.
It follows that $p_{0^*}=0$ and $p_{1^*}=0$ cut ${\mathscr S}_{\lambda_0}$ in the variety

\begin{equation}\nonumber
\{(x,\pm \sqrt{-1}x,z):\sum_\alpha (x_\alpha)^2=0\}.
\end{equation}
$(B_{2^*},C_{2^*})$ or $(B_{3^*},C_{3^*})$ must be nonzero since $M_{+}$ has no points of Condition A; assume it is the former. 
Since $z$ is a free variable, $p_{2^*}=0$ will have nontrivial $z$-terms

\begin{eqnarray}\nonumber
\aligned
0=p_{2^*}&=\sum_{\alpha p}S_{\alpha p}x_\alpha z_p+\sum_{\mu p}T_{\mu p}y_\mu z_p+x_\alpha y_\mu\;\text{terms}\\
&=\sum_{\alpha p}(S_{\alpha p}\pm\sqrt{-1}T_{\alpha p})x_\alpha z_p+x_\alpha x_\mu\;\text{terms},
\endaligned
\end{eqnarray}
taking $y=\pm\sqrt{-1}x$ into account, where $S_{\alpha p}:=\langle S(X^*_\alpha,Z^*_p),n^*_2\rangle$
and $T_{\mu p}:=\langle S(Y^*_\mu,Z^*_p),n^*_2\rangle$ are (real) entries of $B_{2^*}$ and $C_{2^*}$,
respectively, and $X^*_{\alpha},1\leq\alpha\leq m_2$,
$Y^*_{\mu},1\leq\mu\leq m_2$, and $Z^*_{p},1\leq p\leq m_1$, are orthonormal
eigenvectors for the eigenspaces of $S_{n^*_0}$ with eigenvalues $1,-1,$ and $0$,
respectively;
hence, the dimension
of ${\mathscr S}_{\lambda_0}$ will be cut down by 2 by $p_{0^*},p_{1^*},p_{2^*}=0$. In conclusion, modifying~\eqref{Sub} we have

\begin{equation}\nonumber
\dim(V_{2}\cap{\mathscr S}_\lambda)\leq m_1+m_2-2,
\end{equation} 
for all $\lambda\in{\mathcal L}_0$. As a consequence,
the right hand side of~\eqref{refinedest}, which is 5 for $j=0$,
is now cut down to 4 with the additional $p_{2^*}=0$
so that the codimension 2 estimate goes through for ${\mathcal L}_0$ as well. It follows that the isoparametric hypersurface is in fact the example 
constructed by Ozeki and Takeuchi of OT-FKM type, which thus has points of Condition A, a contradiction to the assumption that $M_{+}$ has no points of 
Condition A. Therefore, $M_{+}$ admits points of Condition A.
But then the result of Dorfmeister and Neher implies the isoparametric hypersurface is of OT-FKM type~\cite{Ch1}:

\begin{theorem} Let $(m_1,m_2)=(3,4)$. Then the isoparametric hypersurface is either the homogeneous one, or is the inhomogeneous one constructed by Ozeki and Takeuchi.
\end{theorem}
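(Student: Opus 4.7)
The plan is to run the codimension~2 scheme of Lemma~\ref{generatereg} but with the refined stratification of the hyperquadric ${\mathcal Q}_{k-1}\subset{\mathbb C}P^k$ of nongeneric $\lambda$ by the rank $r_\lambda$ of the off-diagonal block $B$ appearing in~\eqref{Matrix}. Set up an argument by contradiction: assume $M_{+}$ admits no points of Condition~A everywhere. Then I want to show that the components $p_0,p_1,p_2,p_3$ of the second fundamental form of $M_{+}$ form a regular sequence; by Proposition~\ref{iso}, this forces the isoparametric hypersurface to be of OT-FKM type, which in turn would supply points of Condition~A, a contradiction. So $M_{+}$ has to admit a point of Condition~A, and then~\cite{DN} (or the alternative proof in~\cite{Ch2}) concludes that the hypersurface is of OT-FKM type. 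Finally, for the multiplicity pair $(m_1,m_2)=(3,4)$ the only OT-FKM realizations are the inhomogeneous example of Ozeki--Takeuchi (Clifford acting on $M_{+}$) and the homogeneous example of Ferus--Karcher--M\"{u}nzner (Clifford acting on $M_{-}$).

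To produce the regular sequence, stratify the nongeneric locus as ${\mathcal Q}_{k-1}=\bigcup_{j\geq 0}{\mathcal L}_j$ with $r_\lambda=j$ on ${\mathcal L}_j$, and recall the bound $\dim({\mathcal W})\le m_1+m_2+k-2-j$ for an irreducible component ${\mathcal W}$ of $V_k\cap\bigcup_{\lambda\in{\mathcal L}_j}{\mathscr S}_\lambda$. Combined with the lower bound~\eqref{Lower-bound}, the codimension~2 estimate~\eqref{cod2} goes through on ${\mathcal L}_j$ whenever $m_2\ge 2k+1-j$. For $(m_1,m_2)=(3,4)$ and $k\le m_1-1=2$ this is automatic once $j\ge 1$; the generic stratum $x_1=y_1=0$ and the all-real/all-imaginary strata are handled verbatim as in Case~2 preceding~\eqref{Crucial}. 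The only open case is therefore ${\mathcal L}_0$ with $k=2$.

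The heart of the argument, and what I expect to be the main obstacle, is ${\mathcal L}_0$ at $k=2$: in that stratum the shape operators $S_{n_0^*}$ and $S_{n_1^*}$ already share no further vectors in their kernels, and a naive count only yields $\dim(V_2\cap{\mathscr S}_\lambda)\le m_1+m_2-1=6$, which is one short of what~\eqref{cod2} requires. The failure of Condition~A at the point $x^{\#}$ underlying $S_{n_0^*}$ has to be exploited to cut one more dimension: since $M_{+}$ has no Condition~A points, at least one of $(B_{2^*},C_{2^*})$ or $(B_{3^*},C_{3^*})$ is nonzero. Parametrizing ${\mathscr S}_{\lambda_0}$ by $(x,\pm\sqrt{-1}x,z)$ and expanding $p_{2^*}$ (or $p_{3^*}$) produces nontrivial $x_\alpha z_p$ terms in addition to the $x_\alpha x_\mu$ terms, so $p_{2^*}=0$ (respectively $p_{3^*}=0$) genuinely imposes two independent conditions on ${\mathscr S}_{\lambda_0}$ rather than one, yielding $\dim(V_2\cap{\mathscr S}_{\lambda_0})\le m_1+m_2-2=5$. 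Verifying that these conditions are independent requires a careful look at the linear forms in $x,z$ arising from the block structure of $S_{n_2^*}$ (and invoking the variable-type bookkeeping of~\eqref{eq6}); this is the delicate step.

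With the improved estimate in hand on ${\mathcal L}_0$, the codimension~2 inequality~\eqref{cod2} holds on every stratum of $J_k$ for $k\le m_1-1$, so by Lemma~\ref{generatereg} the sequence $p_0,p_1,p_2,p_3$ is regular. Proposition~\ref{iso} then yields that the hypersurface is of OT-FKM type, hence admits points of Condition~A, contradicting the standing assumption. Thus $M_{+}$ admits a Condition~A point; by Dorfmeister--Neher the hypersurface is of OT-FKM type, and the only such examples with $(m_1,m_2)=(3,4)$ are the two listed in the statement. The single genuinely new ingredient beyond the general $m_2\ge 2m_1-1$ regime of Theorem~\ref{normalcool} is the extra vanishing locus picked up from the $(B_{2^*},C_{2^*})$ or $(B_{3^*},C_{3^*})$ block, which is precisely what the negation of Condition~A affords.
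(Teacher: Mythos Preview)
Your proposal is correct and follows essentially the same route as the paper: assume no Condition~A points on $M_{+}$, stratify ${\mathcal Q}_{k-1}$ by $r_\lambda=j$, note that~\eqref{refinedest} handles $j\ge 1$ automatically, and use the nonvanishing of $(B_{2^*},C_{2^*})$ or $(B_{3^*},C_{3^*})$ (guaranteed by the absence of Condition~A) to squeeze out the extra dimension drop on ${\mathcal L}_0$ at $k=2$; then invoke Proposition~\ref{iso} and Dorfmeister--Neher to reach the contradiction and the classification. One small sharpening: when you say the resulting OT-FKM hypersurface ``would supply points of Condition~A,'' the paper is more precise in observing that Proposition~\ref{iso} (via the $Spin$-action on $M_{+}$) actually pins down the Ozeki--Takeuchi example specifically, and it is that example which is known to carry Condition~A points---worth making explicit so the contradiction is airtight.
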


For $(m_1,m_2)=(4,5)$ (vs. $(m_1,m_2)=(6,9)$) and $0\leq k\leq m_{1}-1=3$ (vs. $0\leq k\leq m_1-1=5$), {\em a priori}~\eqref{refinedest} gives 
$5\geq 7-j\geq 2k+1-j$ (vs. $9\geq 11-j\geq 2k+1-j$). Therefore, the codimension 2 estimate goes through for $j\geq 2$ in both cases. Thus it 
looks hopeful that one will only have to handle $j\leq 1$ for the classification. Indeed, this is so. Employing the fact that $M_{+}$ admits
no points of Condition A in the case of these two multiplicity pairs, a delicate analysis was performed in~\cite{Ch3} to establish that either the isoparametric hypersurface
is the inhomogeneous one constructed by Ferus, Karcher and M\"{u}nzner in the $(6,9)$ case where the Clifford action acts on $M_{+}$, or the second fundamental 
form of $M_{+}$ is exactly that of the homogeneous example in either case. The classification result follows by pinning down the third fundamental form to determine
uniquely the Cartan-M\"{u}nzner polynomial via the expansion formula of Ozeki and Takeuchi, where~\eqref{syzygy1} plays a decisive role~\cite{Ch3}:

\begin{theorem} Let $(m_1,m_2)=(4,5)$ or $(6,9)$. Then the isoparametric hypersurface with four principal curvatures is either homogeneous, or is the inhomogeneous 
one constructed by Ferus, Karcher and M\"{u}nzner in the latter case.
\end{theorem}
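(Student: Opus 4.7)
The plan is to carry out, for the two remaining multiplicity pairs, the same stratification scheme that closed the $(3,4)$ case, sharpened to compensate for two new features: the range $0\le k\le m_1-1$ is longer ($k\le 3$ for $(4,5)$ and $k\le 5$ for $(6,9)$), and the regular-sequence conclusion of Proposition~\ref{iso} is known \emph{a priori} to fail for $(4,5)$ (no OT--FKM example exists there) and to hold only for the Clifford-on-$M_+$ branch of $(6,9)$. As before, over ${\mathbb C}P^k$ we split off the hyperquadric ${\mathcal Q}_{k-1}$ of non-generic $\lambda$ and stratify it by ${\mathcal L}_j=\{\lambda:r_\lambda=j\}$, with $r_\lambda$ the rank of the block $B$ in \eqref{Matrix}. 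The refined estimate \eqref{refinedest}, $m_2\ge 2k+1-j$, is automatic whenever $j\ge 2$ in both cases, so only ${\mathcal L}_0$ and ${\mathcal L}_1$ demand additional work.

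The first main step is to bootstrap the argument used in the $(3,4)$ case on ${\mathcal L}_0$ and ${\mathcal L}_1$. For $\lambda_0\in{\mathcal L}_j$ with $j\le 1$ the normalized $S_{1^*}$ has $B,C$ of rank at most one, so ${\mathscr S}_{\lambda_0}$ carries many free $z$-coordinates; since $M_+$ is free of points of Condition A in both multiplicity pairs (a separate, known fact that we invoke), some $(B_{a^*},C_{a^*})$ with $a\ge 2$ is nonzero, and each such block makes $p_{a^*}=0$ consume at least one $z$-direction. Iterating and combining with the Clifford-like identities \eqref{anti-sym1}--\eqref{anti-sym3} that would hold if $p_0,\dots,p_{m_1}$ were regular, one tries to recover the codimension 2 estimate $\dim J_k\le\dim V_k-2$ at every $k\le m_1-1$. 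When the recovery succeeds throughout, Lemma~\ref{generatereg} produces a regular sequence, Proposition~\ref{iso} forces OT--FKM type, and this happens only for $(6,9)$, yielding precisely the inhomogeneous FKM example in which the Clifford action acts on $M_+$.

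The main obstacle is the complementary branch, in which the codimension estimate breaks down at some stratum; this branch is unavoidable for $(4,5)$ and is one of the two alternatives for $(6,9)$. Here the failure of the dimension cutting must be converted into rigid structural equations for the blocks $B_{a^*},C_{a^*}$. Combined with the non-Condition-A hypothesis, these equations are to be solved so as to identify the second fundamental form of $M_+$, at every point, with that of the homogeneous example in the appropriate multiplicity. This is the hardest part of the argument, especially for $(6,9)$, where the quaternion--octonion constraints on $S^a_{\alpha\mu},S^a_{\alpha p},S^a_{\mu p}$ make the linear algebra substantially more intricate than in $(3,4)$ and account for the bulk of the work in \cite{Ch3}. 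Once the second fundamental form of $M_+$ is pinned down, the proof closes by determining the third fundamental form: the identity \eqref{syzygy} and the quadratic relations \eqref{syzygy1}, rearranged via Proposition~\ref{homog} into a polynomial identity in the $p_ap_b$ (exactly as was done in deriving \eqref{orthogonal}), furnish an overdetermined linear system for the $q_a$ whose unique solution matches that of the homogeneous model. The expansion formula \eqref{ot} then reconstructs the Cartan--M\"unzner polynomial $F$ uniquely, and the hypersurface is identified with the homogeneous example, completing both multiplicity cases.
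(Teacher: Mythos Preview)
Your outline matches the paper's own sketch essentially point for point: stratify the hyperquadric ${\mathcal Q}_{k-1}$ by ${\mathcal L}_j$, observe that \eqref{refinedest} handles $j\ge 2$ automatically for both multiplicity pairs, exploit the absence of Condition~A points on $M_+$ to attack $j\le 1$, arrive at the dichotomy ``inhomogeneous FKM with Clifford action on $M_+$ in the $(6,9)$ case'' versus ``second fundamental form agrees with the homogeneous model,'' and then close by determining the third fundamental form through \eqref{syzygy1} (together with Proposition~\ref{homog}) and the Ozeki--Takeuchi expansion \eqref{ot}. One small caution: your sentence about ``combining with the Clifford-like identities \eqref{anti-sym1}--\eqref{anti-sym3} that would hold if $p_0,\dots,p_{m_1}$ were regular'' in order to \emph{recover} the codimension~2 estimate reads as circular---those identities are consequences of regularity, not inputs to its proof---so you should rephrase that step as purely a dimension-cutting argument using the non-vanishing of some $(B_{a^*},C_{a^*})$, in the spirit of the $(3,4)$ analysis.
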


$(m_1,m_2)=(7,8)$ appears to be the most subtle case of all. Unlike the other three cases where either the isoparametric hypersurface is homogeneous for 
$(m_1,m_2)=(4,5)$, or
one is homogeneous and the other is not for $(m_1,m_2)=(3,4)$ or $(6,9)$, the three known examples in this last case are all inhomogeneous and are 
intertwined with the nonassociativity of the octonion algebra. Meanwhile, with $0\leq k\leq m_1-1=6$, {\em a priori}~\eqref{refinedest} gives $8\geq 13-j\geq 2k+1-j$; this becomes much more entangled than the 
previous cases, as we have $j\leq 4$ to handle. To be able to effectively handle the codimension 2 estimate, we may need to introduce a concept more general 
than Condition A. We have made progress in this direction and shall
report on it in the future.

Lastly, we remark that Immervoll~\cite{Im1} gave a different proof of Theorem~\ref{normalcool} by employing isoparametric triple systems Dorfmeister and Neher developed 
in~\cite{DN0}. It appears that the method has not been applicable to the four exceptional cases.

\end{document}